\renewcommand{\epsilon}{\varepsilon}
\renewcommand{\setminus}{\smallsetminus}
\renewcommand{\emptyset}{\varnothing}
\newtheorem*{theorem*}{Theorem}
\newtheorem*{namedtheorem}{\theoremname}
\newcommand{\theoremname}{testing}
\newtheorem{theorem}{Theorem}[section]
\newtheorem{proposition}[theorem]{Proposition}
\newtheorem{lemma}[theorem]{Lemma}
\newtheorem{question}[theorem]{Question}
\newtheorem{thm}[theorem]{Theorem}
\newtheorem{cor}[theorem]{Corollary}
\newtheorem{remark}[theorem]{Remark}
\newtheorem{lem}[theorem]{Lemma}
\numberwithin{equation}{section}
\theoremstyle{definition}
\newcommand{\Z}{\mathbb Z}
\newcommand{\E}{\mathbb E}
\newcommand{\GL}{\operatorname{GL}}
\newcommand{\cohom}[3]{H^{{\raise1pt\hbox{$\scriptstyle#1$}}}(#2\>\!,#3)}
\newcommand{\tatecohom}[3]%
  {\widehat H^{{\raise1pt\hbox{$\scriptstyle#1$}}}(#2\>\!,#3)}
\newcommand{\Cohom}[3]%
  {H^{{\raise1pt\hbox{$\scriptstyle#1$}}}\big(#2\>\!,#3\big)}
\newcommand{\Tatecohom}[3]%
  {\widehat H^{{\raise1pt\hbox{$\scriptstyle#1$}}}\big(#2\>\!,#3\big)}
\newcommand{\homol}[3]{H_{{\lower1pt\hbox{$\scriptstyle#1$}}}(#2\>\!,#3)}
\newcommand{\homolog}[2]{H_{{\lower1pt\hbox{$\scriptstyle#1$}}}(#2)}
\DeclareMathOperator{\Out}{Out}
\DeclareMathOperator{\Aut}{Aut}
\newcommand{\lk}{\operatorname{lk}}
\newcommand{\st}{\operatorname{st}}
\numberwithin{equation}{section}
\newcommand{\q}{\mathbf q}
\newcommand{\D}{\mathcal{D}}
\def\P{\mathbf{P}}
\def\E{\mathbf{E}}
\def\unogrande{\text{\large\bf 1}}
\def\T{\mathcal T}
\newtheorem{theoremletra}{Theorem}
\begin{document}

\title[Trees, homology, and automorphism groups of RAAGs]{Trees, homology, and automorphism groups of RAAGs}

\author[J. Aramayona, J.\,L. Fern\'andez, P. Fern\'andez and C. Mart\'inez-P\'erez]{Javier Aramayona, Jos\'e L. Fern\'andez, Pablo Fern\'andez and Conchita Mart\'inez-P\'erez}

\address{Javier Aramayona: Department of Mathematics, Universidad Aut\'onoma de Madrid, 28049 Madrid, Spain.}
\email{javier.aramayona@uam.es}

\address{Jos\'e L. Fern\'andez: Department of Mathematics, Universidad Aut\'onoma de Madrid, 28049 Madrid, Spain.}
\email{joseluis.fernandez@uam.es}

\address{Pablo Fern\'andez: Department of Mathematics, Universidad Aut\'onoma de Madrid, 28049 Madrid, Spain.}
\email{pablo.fernandez@uam.es}

\address{Conchita Mart\'{\i}nez-P\'erez: Department of Mathematics,  Universidad de Zaragoza,  Pedro Cerbuna s/n,  50009 Zaragoza, Spain.}
\email{conmar@unizar.es}

\keywords{Automorphism groups, RAAGs, trees, Betti numbers, lower bounds, symbolic method, exponential generating functions.}
\subjclass[2010]{20F65}

\begin{abstract}
We study the homology of an explicit finite-index subgroup of the automorphism group of a partially commutative group, in the case when its defining graph is a tree. More concretely, we give a lower bound on the first Betti number of this subgroup, based on the number and degree of a certain type of vertices, which we call {\em deep}. We then use combinatorial methods to analyze the average value of this Betti number, in terms of the size of the defining tree.

\end{abstract}

\maketitle

\section{Introduction}

 Let $K$ be an (undirected) finite graph, and write $V(K)$ and $E(K)$ for its set of nodes and edges, respectively.
 The {\em righ-angled Artin group} (RAAG, for short) defined by $K$ is the group $A_K$ given by the presentation
 $$
 A_K=\langle a\in V(K) \mid [a,b]=1 \iff ab \in E(K) \rangle,
 $$
where $ab$ denotes the edge joining $a$ and $b$, and $[a,b]=aba^{-1}b^{-1}$.

Observe that if $K$ is a complete graph with $n$ nodes, then $A_K\cong \Z^n$; at the other end of the spectrum, if $K$ has no edges then $A_K\cong F_n$, the free group on $n$ letters. For a fixed number $n$ of nodes, the groups $A_K$ interpolate between these two extremal cases of $\Z^n$ and $F_n$.
For instance, for a complete bipartite graph $K$, $A_K$ is a direct product of two free groups, while for a not connected graph $K$, $A_K$ is the free product of the RAAGs defined by the connected components of $K$.

In this paper we will study the automorphism group $\Aut(A_K)$ of $A_K$ which, by the discussion of the paragraph  above,  interpolates between the important cases of $\Aut(F_n)$ and $\Aut(\mathbb{Z}^n)= \GL(n,\Z)$. More concretely, we will restrict our attention to the case when the defining graph $K$ is a tree.


\subsection{Abelianization of finite-index subgroups}  Recall that the {\em abelianization} of a group $G$ is the quotient $G^{\rm{ab}}:= G/[G,G]$, where $[G,G]$ is the commutator subgroup of $G$. By definition, $G^{\rm{ab}}$ is abelian, and has a further incarnation as the first homology group $H_1(G,\Z)$ of $G$.
Observe that, since~$G^{\rm{ab}}$ is abelian, it may be decomposed as $B \oplus \Z^N$, where $B$ is a finite abelian group. The number $N$ is called the {\em rank} of $G^{\rm{ab}}$, also known as the {\em first Betti number} of $G$, denoted $b_1(G)$.

A celebrated theorem of Kazhdan \cite{Kazhdan} implies that if $G<\Aut(\mathbb{Z}^n)=\GL(n,\Z)$ has finite index, then $b_1(G)=0$.
Motivated by this, a well-known open question asks whether the same holds true for finite index subgroups of $\Aut(F_n)$, where $n\ge 4$. We remark that the condition $n\ge 4$ is crucial, for Grunewald--Lubotzky \cite{GL} have constructed an explicit finite-index subgroup of $\Aut(F_3)$ with positive first Betti number.

We may consider the analogous problem for automorphism groups of arbitrary RAAGs, although one needs to be slightly careful about how to formulate it. Indeed, it is often the case that $\Aut(A_K)$ contains $A_K$ as a subgroup of finite index (see Charney--Farber \cite{CF} and Day \cite{Day-random} for explicit results in this direction), and $b_1(A_K)\ge 1$ so long as $K$ has at least one node.
With this caveat in mind, one may still search for  combinatorial conditions on $K$ that guarantee the existence of finite index subgroups of $\Aut(A_K)$ with positive first Betti number, and which apply to graphs $K$ for which~$A_K$ has infinite index in $\Aut(A_K)$ (this is the case, for instance, when $K$ is a tree).

The discussion of this type of conditions is the objective of this paper. Before proceeding any further,  we introduce some definitions and notations about graphs and, in particular, trees.

\subsection{Graphs and trees}

Given a graph $K$, the set of neighbors of a vertex $v\in V(K)$ will be called the \textit{link} of $v$:
\[
\lk(v) = \{ w \in V(K) \mid vw \in E(K)\}.
\]
The {\em degree} of $v$ is the cardinality of $\lk(v)$.  We define the {\em star} of $v$ as $\st(v) := \lk(v) \cup \{v\}$. For simplicity, by $\st(v)$ we also mean the subgraph with these vertices and the edges from $v$ in $K$.

If $K_2$ is a subgraph of a graph $K_1$, we denote by $K_1 \setminus K_2$ the full subgraph of $K_1$ induced by the vertices which belong to $K_1$ but not to $K_2$.

We endow $V(K)$ with its usual combinatorial distance $d$, namely, given $u,v\in V(K)$, we define $d(u,v)$ as the minimal $n$ for which there exist nodes $u= w_0, w_1, \ldots, w_n=v$ in $V(K)$ with $w_jw_{j+1} \in E(K)$ for all $j$.

\subsubsection{Trees}
A graph $T$ is a {\em tree} if it is connected and every edge separates~$T$ into two connected components, or, alternatively,  if it is connected and has no {\em cycles}. From now on, we will use $T$ to refer to a tree.

A node of a tree $T$ is a {\em leaf} if its degree is 1.
The {\em boundary} $\partial T$ of $T$ is the set of leaves of~$T$ and, for a node $v$ of $T$, we write $\partial_v$ to abbreviate $d(v, \partial T)$, the distance from $v$ to $\partial T$.

A node $v$ of a tree $T$ is called {\em deep} if $\partial_v\ge 3$. The subset of deep nodes of a tree $T$ is denoted by $D(T)$. A tree is termed {\em shallow} if it has no deep nodes, i.e., all its nodes are at distance at most 2 from the boundary.

We say that a tree $T$ is  {\em rooted} if it has one distinguished node, called the {\em root} of $T$. We say that $T$ is {\em labeled} if
there is a bijection between the nodes of $T$ and the set $\{1, \ldots, n\}$, where $n$ is the number of nodes of~$T$.

\subsection{Automorphisms of RAAGs defined by trees}
In \cite{AMP}, the first and fourth named authors  identified two properties of a graph $K$, each of which implies the existence of finite-index subgroups of $\Aut(K)$ with positive first Betti number; see Corollary 1.4 and Theorem 1.6 of \cite{AMP}. In this paper we will study one of these conditions in the particular case when $T$ is a tree.
At this point we remark that, apart from forming a natural subclass, RAAGs defined by trees are also interesting from a topological viewpoint, as they are examples of fundamental groups of certain three-dimensional manifolds called {\em graph manifolds}.

We shall denote by ${\Aut^\star}(A_K)$ the finite-index subgroup of $\Aut(A_K)$ generated by {\em transvections}, {\em partial conjugations}, and {\em thin inversions}; see Section~\ref{sec:RAAGS} for an expanded definition.

The following theorem is Proposition 5.3 in \cite{AMP}, which is simply a restatement of Theorem 1.6 in \cite{AMP} in the particular case when the given graph is a tree.

 \begin{theorem*}[\cite{AMP}]\it
 If the tree $T$ is not shallow, then  $b_1({\Aut^\star}(A_T)) \ge 1$.
 \label{thm:AMP}
\end{theorem*}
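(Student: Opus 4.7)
The plan is to prove $b_1(\Aut^\star(A_T)) \ge 1$ by exhibiting an explicit nonzero homomorphism $\phi\colon \Aut^\star(A_T) \to \Z$, from which the first Betti number bound follows immediately by definition. Since $\Aut^\star(A_T)$ admits a finite presentation whose generators are the transvections, partial conjugations, and thin inversions, constructing $\phi$ reduces to assigning an integer weight to each generator and checking that every defining relation maps to the trivial identity in $\Z$ once abelianized.

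Fix a deep node $v \in D(T)$, so $\partial_v \ge 3$. Removing $v$ splits the tree into $\deg(v)$ subtrees $T_1, \ldots, T_k$, one hanging off each neighbor of $v$, and each $T_i$ gives rise to a partial conjugation $c_i$ that conjugates the vertices of $T_i$ by $v$. The condition $\partial_v \ge 3$ guarantees that each branch $T_i$ has depth at least $2$, so that each neighbor $w$ of $v$ in $T_i$ has a further neighbor inside $T_i$ that is not itself a leaf. My candidate homomorphism assigns $\phi(c_{i_0}) = 1$ for one chosen branch $T_{i_0}$ and $\phi = 0$ on every other generator. The task is then to verify that this assignment is compatible with all the defining relations.

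The relations fall into three families: commutation identities between generators with disjoint or compatible supports (which abelianize trivially); Nielsen-type identities purely among transvections (which contain no $c_i$, so both sides have $\phi$-value $0$); and the mixed "conjugation calculus" relations expressing how a partial conjugation conjugates a transvection as a product of other generators. The depth hypothesis is precisely what prevents $c_{i_0}$ from appearing asymmetrically in any such mixed relation: the only way $c_{i_0}$ could enter with unequal coefficients on the two sides would require the existence of a vertex $u$ within distance $2$ of $v$ satisfying a link inclusion of the form $\lk(u) \subseteq \st(v)$, contradicting $\partial_v \ge 3$.

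The main obstacle I anticipate is precisely this third family of relations. One must work through a case analysis, organized by the distance from $v$ to the vertices appearing in each transvection or partial conjugation, and verify that the net $c_{i_0}$-coefficient on the two sides of each relation agrees. This is where the bound $\partial_v \ge 3$ (rather than $\partial_v \ge 2$) becomes indispensable: with $\partial_v = 2$, a neighbor of $v$ inside $T_{i_0}$ could be adjacent to a leaf, and the corresponding Nielsen transformation would drag $c_{i_0}$ into a relation with mismatched coefficients, breaking the homomorphism property. Once that case analysis is complete, $\phi$ is a well-defined surjection onto $\Z$ and the theorem follows.
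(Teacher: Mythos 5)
Your overall strategy is exactly the one the paper uses: it proves the stronger Theorem~A by sending each partial conjugation $c_{Y,v}$ based at a deep node $v$ to a basis vector of $\Z^{|\Omega|}$ and every other generator to $0$, then verifying Day's relators; your map is the rank-one special case of that. You have also correctly isolated the mechanism that makes this work: by Lemma~\ref{lem:po}, $u\le v$ forces $u$ to be a leaf with $d(u,v)\le 2$, so a deep node $v$ admits no transvection onto it and is $\sim$-equivalent only to itself, which is precisely what neutralizes the mixed relators (R4), (R5), (R10) at $v$.

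However, as written the proposal has two genuine gaps. First, the statement that $\Aut^\star(A_T)$ ``admits a finite presentation whose generators are the transvections, partial conjugations, and thin inversions'' is not free: the only presentation available is Day's, whose generators are \emph{all} Whitehead automorphisms, and showing that it restricts to $\Aut^\star(A_T)$ requires rerunning Day's peak-reduction and sorting arguments inside the subgroup (this is the content of the paper's Proposition~2.7, and it is where thin inversions versus all inversions actually matters). Second, and more concretely, the assignment ``$\phi(c_{i_0})=1$ and $\phi=0$ on every other generator'' is inconsistent with relator (R2): for two branches $Y_{i_0},Y_{i_1}$ at $v$ one has $c_{Y_{i_0},v}\,c_{Y_{i_1},v}=(Y_{i_0}\cup Y_{i_0}^{-1}\cup Y_{i_1}\cup Y_{i_1}^{-1}\cup v,\,v)$, whose right-hand side is ``another generator'' and would be sent to $0$ while the left-hand side is sent to $1$. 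You must instead define $\phi$ on a general $(A,v)$ by the multiplicity of the chosen branch in the (unique) decomposition of $A$ into connected components of $T\setminus\st(v)$, as the paper does, and only then carry out the case analysis over (R1)--(R10) that you defer. With those two repairs the argument goes through and coincides with the paper's.
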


In this note we refine the methods of \cite{AMP} in order to give a better lower bound on this rank, again in the particular case when $T$ is a tree.

We introduce, for any tree $T$, the graph invariant
$$
\Upsilon(T):= \sum_{v\in D(T)}  \sum_{w\in \lk(v)} (\deg(w) -1)\, .
$$
As we shall see in Section \ref{sec:RAAGS}, the invariant $\Upsilon(T)$ counts precisely the number of the so-called partial conjugations of $A_T$ defined by deep nodes.

Observe that for a deep node $v$ in a tree, $\sum_{w\in \lk(v)} (\deg(w) -1)\ge 2$, and thus for any tree $T$,
\begin{equation}
\label{eq:comparison Upsilon number of deep}
\Upsilon(T)\ge 2 |D(T)|\, .
\end{equation}

The first result of this paper is the following lower bound of the Betti number of $\Aut^\star(A_T)$.
\begin{theoremletra}\it
For any tree $T$,   the bound $b_1({\Aut^\star}(A_T)) \ge \Upsilon(T)$ holds.
\label{mainthm:hom}
\end{theoremletra}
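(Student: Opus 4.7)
The plan is to construct $\Upsilon(T)$ linearly independent group homomorphisms $\phi_{v,C}\colon \Aut^\star(A_T)\to \Z$, one for each pair $(v,C)$ with $v\in D(T)$ and $C$ a connected component of $T\setminus \st(v)$. Since in a tree the number of components of $T\setminus \st(v)$ equals $\sum_{w\in\lk(v)}(\deg(w)-1)$, the cardinality of this family is exactly $\Upsilon(T)$. Because the rank of $\Hom(\Aut^\star(A_T),\Z)$ coincides with $b_1(\Aut^\star(A_T))$, exhibiting such a collection of homomorphisms will suffice.

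The definition of each $\phi_{v,C}$ is the most naive possible: on the standard generating set of $\Aut^\star(A_T)$ recalled in Section~\ref{sec:RAAGS}---transvections, partial conjugations, and thin inversions---I set $\phi_{v,C}(\pi_{v,C})=1$ and send every other generator to $0$. Linear independence of the resulting family is then automatic, since each $\phi_{v,C}$ has support equal to the single generator $\pi_{v,C}$. The real content of the argument is to show that $\phi_{v,C}$ actually extends to a well-defined homomorphism on $\Aut^\star(A_T)$, i.e.\ that it vanishes on every defining relation.

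To this end, I would go through the (tree-specialized) Laurence--Servatius style presentation of $\Aut^\star(A_T)$. Commutator-type relations vanish automatically after abelianization, so the work reduces to those relations which, read additively, would equate $\pi_{v,C}$ to a non-trivial combination of other generators, together with relations tying together different partial conjugations based at $v$ or at neighbours of $v$. Here the hypothesis that $v$ is deep plays its decisive role: the condition $\partial_v\ge 3$ forces no neighbor $w$ of $v$ to be a leaf, and more generally provides a buffer of radius two around $v$ in $T$. This buffer rules out the transvection and dominated-inversion relations that would otherwise mix $\pi_{v,C}$ with generators supported far from $v$, and keeps the partial conjugations $\pi_{v,C'}$ for distinct components $C'$ at $v$ independent modulo the other generators.

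The main obstacle is precisely this last verification: a case-by-case inspection of the defining relations of $\Aut^\star(A_T)$, showing that the depth condition $\partial_v\ge 3$ is exactly what is needed to decouple $\pi_{v,C}$ from every other generator at the level of the abelianization. The check is finite but not routine, and isolating the places where depth three (rather than, say, two) is required is the crux of the argument, since the earlier result of \cite{AMP} already produces $b_1\ge 1$ under the same hypothesis and what is to be refined is the multiplicity with which each deep node contributes.
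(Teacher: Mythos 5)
Your overall strategy coincides with the paper's: the paper also detects each partial conjugation $c_{Y,v}$ with $v$ deep by a homomorphism to $\Z$ (packaged there as a single surjection $\Aut^\star(A_T)\twoheadrightarrow\Z^{|\Omega|}$ sending $c_{Y,v}\mapsto 1_{c_{Y,v}}$), and the role you assign to depth --- a radius-two buffer around $v$ ruling out leaves, hence ruling out the relations that would force $c_{Y,v}$ to die in $H_1$ --- is exactly how the paper uses it. However, the proposal has a genuine gap, and it sits precisely where you locate ``the crux.'' First, there is no presentation of $\Aut^\star(A_T)$ on the Laurence--Servatius generators (transvections, partial conjugations, thin inversions) for you to check relations against; Laurence--Servatius only give a generating set. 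The paper must instead work with Day's presentation, whose generating set is the strictly larger set of \emph{all} Whitehead automorphisms, and before it can even start it has to prove (the $\operatorname{Sym}^1(A_K)$ Proposition, via Day's peak reduction and sorting substitutions) that Day's presentation restricts to a presentation of the subgroup $\Aut^\star(A_T)$. This step is nontrivial and is entirely absent from your plan.

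Second, once the generators are all type (2) Whitehead automorphisms $(A,a)$, the prescription ``send every generator other than $\pi_{v,C}$ to $0$'' is inconsistent: relation (R2) gives $(A\cup B,a)=(A,a)(B,a)$ when $A\cap B=\{a\}$, so a generator $(A,a)$ that decomposes as a product containing $c_{Y,v}$ cannot be sent to $0$. The paper instead defines $\varphi(A,a)$ additively from the (unique) decomposition of $(A,a)$ into partial conjugations when $a$ is deep, sets $\varphi(A,a)=0$ when some leaf lies within distance $2$ of $a$, and handles inverses via (R1); only then does it verify (R3)--(R10), where the key observations are that $b\le a$ forces $b$ to be a leaf at distance $\le 2$ from $a$ (killing the extra terms in (R4), (R5), (R10)) and that elements of $\operatorname{Sym}^1(A_T)$ fix each $Y\cup Y^{-1}\cup a$ setwise when $a$ is deep (handling (R6)$'$). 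None of this is routine bookkeeping, and since your write-up defers all of it, the proof is not complete as it stands.
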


This result implies that if $T$ is a tree with at least one deep node, then $b_1({\Aut^\star}(A_T)) \ge 1$, as asserted in the result from \cite{AMP} stated above.

\subsection{Combinatorics of deep nodes}
Next we turn our attention to the combinatorial analysis of deep nodes and shallow trees, and to the study of the ``typical size'' of the combinatorial invariant $\Upsilon(T)$. 

We will   carry out this study in terms of {\em labeled} trees. This corresponds to considering RAAGs with labeled generators. Of course, isomorphic classes of \textit{unlabeled} trees correspond to isomorphic classes of RAAGs.


Let $\mathcal{U}_n$ denote the set of trees with $n$ nodes labeled with $\{1,\ldots, n\}$. Cayley's theorem says that the cardinality of this set $\mathcal{U}_n$ is exactly $n^{n-2}$ for~$n \ge 1$.

As we will see below (Theorem \ref{thm:betti on average como n}), for a typical tree $T$ in $\mathcal{U}_n$, $b_1({\Aut^\star}(A_{T}))$ is quite large;
%
%
%
%
%
%
%
although we point out that for every $n$ there  are trees $T$ with~$n$ nodes such that $b_1({\Aut^\star}(A_{T})) =0$; see Lemma \ref{lemma:trees with betti null}. It seems that the proportion of trees in $\mathcal{U}_n$ for which $b_1({\Aut^\star}(A_{T})) =0$ is asymptotically negligible as $n\to\infty$.   It would be nice to ascertain this, and to establish the precise speed of convergence to $0$.

\smallskip

Theorem \ref{thm:expected number of deep nodes in unrooted trees} below
%
asserts that
$$\lim_{n \to \infty} \frac{1}{|\mathcal{U}_n|}\sum_{T \in \mathcal{U}_n}\frac{|D(T)|}{n}=c_3\, .$$
The constant $c_3$ is about $0.35$, and thus we can say that for $n$ large a typical labeled tree with $n$ nodes has about 35\% of deep nodes.

\smallskip

Concerning the invariant $\Upsilon(T)$, as we will see in Theorem \ref{thm:star in unrooted trees}, we have that
%
$$
\lim_{n \to \infty} \frac{1}{|\mathcal{U}_n|}\sum_{T \in \mathcal{U}_n}\frac{|\Upsilon(T)|}{n}=d_3\, ;
$$
the value of the constant $d_3$ is $\approx 2\mbox{.}070$.
 In particular, we may say that for~$n$ large and a typical tree $T\in \mathcal{U}_n$, the invariant $\Upsilon(T)$ is about $d_3 \, n$.

In other words, we will get:

\begin{theoremletra}\label{thm:betti on average como n}\it
For $n$ large and a typical tree $T \in U_n$, we have \[b_1(\Aut^\star(A_{T})) \ge d_3 \, n.\] More concretely, \[\liminf_{n\to \infty}\frac{1}{|U_n|} \sum_{T\in U_n} \frac{b_1(\Aut^\star(A_{T}))}{n} \ge d_3.\]
\end{theoremletra}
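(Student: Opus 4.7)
The plan is to obtain Theorem B as a direct consequence of two ingredients already at hand in the paper: the deterministic pointwise lower bound of Theorem A, and the asymptotic average of $\Upsilon$ stated as Theorem~\ref{thm:star in unrooted trees}. Theorem A converts the group-theoretic quantity $b_1(\Aut^\star(A_T))$ into the purely combinatorial invariant $\Upsilon(T)$, after which one only needs to pass to averages and read off the limit.

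Concretely, I would proceed as follows. By Theorem A, the inequality $b_1(\Aut^\star(A_T)) \ge \Upsilon(T)$ holds tree by tree, with no exceptional set. Summing over all $T\in \mathcal{U}_n$ and normalising by $n|\mathcal{U}_n|$ gives, for every $n$,
\[
\frac{1}{|\mathcal{U}_n|}\sum_{T\in \mathcal{U}_n}\frac{b_1(\Aut^\star(A_T))}{n}\;\ge\;\frac{1}{|\mathcal{U}_n|}\sum_{T\in \mathcal{U}_n}\frac{\Upsilon(T)}{n}.
\]
Taking $\liminf$ on both sides and invoking Theorem~\ref{thm:star in unrooted trees}, which asserts that the right-hand side converges (as an honest limit) to the constant $d_3 \approx 2.070$, yields at once the quantitative assertion of Theorem B. The qualitative first sentence is then only the informal reading of this asymptotic inequality, namely that on average $b_1(\Aut^\star(A_T))$ grows at least linearly in $n$ with slope $d_3$.

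There is no genuine obstacle in this short deduction itself; the substance is packaged into the two results it draws on. Theorem A supplies a uniform, deterministic lower bound by producing $\Upsilon(T)$ independent homomorphisms $\Aut^\star(A_T) \to \Z$, one for each partial conjugation attached to a deep vertex, and checking independence at the level of abelianisations. The genuinely combinatorial heart of the argument lies in Theorem~\ref{thm:star in unrooted trees}, whose proof, as advertised in the introduction, proceeds by the symbolic method and exponential generating functions; this is where the precise constant $d_3$ is computed. The main thing to be careful about in the deduction is the order of inequalities: one should not try to upgrade the $\liminf$ to a limit or to a pointwise lower bound for almost every tree, since we have not ruled out trees with $b_1=0$ (Lemma~\ref{lemma:trees with betti null}). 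Once this is kept in mind, Theorem B follows from Theorem A and Theorem~\ref{thm:star in unrooted trees} essentially for free.
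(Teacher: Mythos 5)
Your deduction is correct and is exactly how the paper obtains Theorem~\ref{thm:betti on average como n}: the pointwise bound $b_1(\Aut^\star(A_T))\ge\Upsilon(T)$ from Theorem A is averaged over $\mathcal{U}_n$ and combined with the limit $\lim_{n\to\infty}\frac{1}{n}\E'_n(\Upsilon)=d_3$ of Theorem~\ref{thm:star in unrooted trees}. Your remark that one only gets a $\liminf$ for the left-hand side, and cannot upgrade to a pointwise statement because of the trees of Lemma~\ref{lemma:trees with betti null}, is also in line with the paper.
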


For the sake of completeness at this point, we remark that the explicit values of the constants $c_3$ and $d_3$ are
$$
c_3=\frac{1}{e}\, e^{-1/e} \,e^{(e^{1-1/e}-1)/e}\quad\text{and}\quad
d_3=2-\frac{1}{e}+\frac{1}{e}\, \Big(1-\frac{1}{e}\Big)\,e^{1-1/e}\, .
$$

\subsection{Plan of the paper}
Section \ref{sec:RAAGS} is devoted to the proof of Theorem \ref{mainthm:hom}.
Section \ref{sec:deep and shallow}
contains the combinatorial analysis which leads to the proof 
 of Theorem 
 \ref{thm:betti on average como n}.

\subsection*{Acknowledgements} J.\,A and C.\,M. are partially supported by the MI\-NE\-CO grant MTM2015-67781-P; further, J.\,A is funded by the Ram\'on y Cajal grant 2013-RYC-13008. J.\,L.\,F. and P.\,F. are partially supported by Fundaci\'on Akusmatika, and C.\,M. is funded by Gobierno de Arag\'on and European Regional
Development Funds.

\section{RAAGs and their automorphisms}\label{sec:RAAGS}

Recall from the introduction that, given a finite graph $K$, the {\em right-angled Artin group} (RAAG, for short) defined by $K$ is the group $A_K$ with presentation \[A_K = \langle v\in V(K) \mid [v,w]=1 \iff vw \in E(K)\rangle.\]
 In order to relax notation, we will blur the distinction between nodes of $K$ and generators of $A_K$. For instance, given a vertex $v\in V(K)$ we will write $v^{-1}$ for the inverse of the generator corresponding to $v$ in $A_K$. In addition, we will write $V(K)^{-1}$ for the set of inverses of elements of $V(K)$, when viewed as generators of $A_K$.

Here we will focus on the automorphism group $\Aut(A_K)$ of $A_K$. Our first aim is to describe a standard generating set for $\Aut(A_K)$, introduced by Laurence~\cite{Laurence} and Servatius \cite{Servatius}. Before doing so, we will need to introduce a certain partial order on the set of vertices of $K$.   

\subsection{A partial order on the set of nodes}
\label{subsec:po} There is a standard partial order $\le$ on the set of nodes of $K$, whereby $v\le w$ if $\lk(v) \subset \st(w)$. We will write $v\sim w$ to mean $v\le w$ and $w\le v$; it is easy to see that $\sim$ is an equivalence relation. We will say that a node $v\in V(K)$ is {\em thin} if its equivalence class, with respect to $\sim$, has exactly one element.

We record the following observation for future use:

\begin{lemma}
Let $T$ be a tree with at least three nodes. Then
\begin{enumerate}
\item $v\le w$ if and only if $v\in \partial T$ and $d(v,w)\le 2$;
\item $v\sim w$ if and only if $v,w\in \partial T$ and there exists $u\in V(T)$ with $v,w\in \lk(u)$.
\end{enumerate}
\label{lem:po}
\end{lemma}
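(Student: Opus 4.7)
The plan is to prove (i) directly by exploiting the acyclicity of $T$, which will force $v$ to be a leaf whenever $v\le w$ with $v\ne w$, and then to deduce (ii) as an almost formal consequence of (i).

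For the forward direction of (i), assume $v\le w$, i.e.\ $\lk(v)\subset \st(w)=\lk(w)\cup\{w\}$. The key step is to show $v\in\partial T$. Suppose for contradiction that $v$ has two distinct neighbors $a,b$; then both lie in $\lk(w)\cup\{w\}$. If one of them equals $w$, say $a=w$, then $b\in\lk(w)$ and we obtain a triangle on $\{v,w,b\}$; if neither equals $w$, then $a,b\in\lk(w)$ and we obtain a $4$-cycle $v,a,w,b,v$. Both situations contradict the fact that $T$ is a tree. Hence $\lk(v)=\{u\}$ for a unique neighbor $u$, and the containment $\{u\}\subset\lk(w)\cup\{w\}$ forces either $u=w$ or $u\in\lk(w)$, so $d(v,w)\le 2$. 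The converse direction of (i) is a short verification: writing $\lk(v)=\{u\}$, case analysis on $d(v,w)\in\{0,1,2\}$ shows in each case that $u\in\st(w)$, whence $\lk(v)\subset\st(w)$.

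For (ii), applying (i) to both $v\le w$ and $w\le v$ gives simultaneously $v,w\in\partial T$ and $d(v,w)\le 2$. Since $T$ has at least three nodes, two distinct leaves cannot be joined by an edge, so either $v=w$ (in which case any neighbor of $v$ serves as the required $u$) or $d(v,w)=2$, and any length-$2$ path between $v$ and $w$ then exhibits a common neighbor $u$. The converse implication is immediate from (i): if $v,w\in\partial T$ share a neighbor $u$, then $d(v,w)\le 2$, and applying (i) symmetrically yields $v\le w$ and $w\le v$.

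The only mild subtlety is the trivial case $v=w$, where the partial-order condition holds tautologically and the interesting content lies in the case $v\ne w$. The main technical point is really just the triangle/$4$-cycle dichotomy used to force $v$ to be a leaf, and the rest is a bookkeeping exercise with the distance function.
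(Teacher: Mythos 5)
Your proof is correct. Note that the paper states this lemma without proof (it is ``recorded as an observation''), so there is no argument to compare against; your triangle/$4$-cycle dichotomy forcing $v$ to be a leaf, followed by the short distance bookkeeping, is exactly the natural verification. You are also right to flag the case $v=w$: since $\le$ is reflexive by definition, the statement as literally written only makes sense for distinct nodes, and both your reading and the paper's subsequent use of the lemma (which concerns equivalence classes with at least two elements) are consistent with that convention.
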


Note that ${\rm (ii)}$ above implies that the $\sim$-equivalence classes of nodes with $k\ge 2$ elements consist precisely of sets of $k$ leaves which are neighbors of a same node.
A further consequence is that the  thin nodes of a tree are either nodes that are not leaves, or leaves whose only neighbor is not connected to other leaves.

\subsection{Laurence--Servatius generators}
\label{subsec:LS}
We distinguish the following four types of automorphisms of $\Aut(A_K)$:
\begin{enumerate}
\item {\em Graphic automorphisms.} Every automorphism of $K$ induces an element of $\Aut(A_K)$, which we call {\em graphic}.
\item {\em Inversions.} Given $v\in V(K)$, the {\em inversion} on $v$ is the automorphism that sends $v$ to $v^{-1}$, and fixes the rest of generators.
\item {\em Transvections.} Given $u,v\in V(K)$, the {\em transvection} $t_{uv}$ sends $u$ to $uv$, and fixes the rest of generators. It is not difficult to see that $t_{uv} \in \Aut(A_K)$ if and only if $u\le v$.

\item {\em Partial conjugations.} Let $u \in V(K)$, and let $Y$ be a connected component of $K \setminus \st(u)$. The {\em partial conjugation} $c_{u,Y}$ is the automorphism given by $c_{Y,u} (v) = u^{-1} v u$ if $v\in Y$, and $c_{Y,u}(v) =v$ otherwise.
\end{enumerate}

Laurence \cite{Laurence} and Servatius \cite{Servatius} proved that these four types of automorphisms suffice to generate $\Aut(A_K)$:

\begin{theorem}[\cite{Laurence}, \cite{Servatius}]
Let $K$ be any graph. Then $\Aut(A_K)$ is generated by the sets of graphic automorphisms, inversions, transvections, and partial conjugations.
\end{theorem}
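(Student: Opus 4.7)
The plan is to carry out a Nielsen-style \emph{peak reduction} on $\Aut(A_K)$, in the spirit of Servatius's original argument for the link condition and its extension by Laurence. Fix a geodesic word-length $|\cdot|$ on $A_K$ with respect to the generating set $V(K)\cup V(K)^{-1}$, and assign to any $\phi\in\Aut(A_K)$ the complexity $c(\phi):=\sum_{v\in V(K)} |\phi(v)|$. The goal is to show that if $\phi$ does not already lie in the subgroup generated by graphic automorphisms and inversions, then one can compose $\phi$ on one side with a transvection or a partial conjugation to strictly decrease $c$. Induction on $c(\phi)$ then proves the theorem.

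To make sense of supports and lengths, I would first invoke a normal form for elements of $A_K$: every element admits a reduced expression which is unique up to shuffling commuting letters, so that $\supp(w)$ and $|w|$ are well defined, and the centralizer of a generator $v$ is exactly the subgroup generated by $\st(v)$. Next, I would analyze an arbitrary $\phi\in\Aut(A_K)$ through its induced action on the abelianization $\Z^{V(K)}$ together with its effect on the collection of cyclic words, and verify that $\phi$ must respect the partial order $\le$ of Section~\ref{subsec:po} in the following sense: if $v$ is thin, then after composing with a graphic automorphism the image $\phi(v)$ is conjugate into $\langle \st(v)\rangle$, while for non-thin vertices the image is controlled by the $\sim$-equivalence class of $v$.

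The crux, and the step I expect to be the main obstacle, is showing that whenever some $\phi(v)$ fails to be a single generator (up to inversion and conjugation), a reducing move exists. Concretely, if $|\phi(v)|>1$, I would decompose $\phi(v)$ into its maximal commuting blocks and study the leftmost and rightmost letters: one of these must either be cancellable by a transvection $t_{uv}$ with $u\le v$ in $K$, or by a partial conjugation $c_{u,Y}$ where $Y$ is the connected component of $K\setminus \st(u)$ containing the offending letter. This is where the centralizer description is essential: the classification of connected components of $K\setminus \st(u)$ guarantees that the candidate partial conjugation is indeed an automorphism, while the partial order $\le$ dictates which transvections are available. Proving that at least one such move is always admissible requires a case analysis on the way the first and last blocks of $\phi(v)$ interact with the link and star structure of $K$, and this combinatorial bookkeeping is the technical heart of the argument.

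Once $c(\phi)$ has been reduced to its minimal value, every $\phi(v)$ is a single generator or its inverse, and $\phi$ acts on $V(K)\cup V(K)^{-1}$ as a bijection preserving the defining commutation relations; hence $\phi$ is a composition of a graphic automorphism and inversions. Multiplying back the transvections and partial conjugations accumulated during the reduction yields the required factorization of the original $\phi$, completing the induction.
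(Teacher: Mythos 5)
The paper does not prove this statement at all: it is quoted verbatim from Laurence and Servatius, so there is no internal argument to compare yours against. Judged on its own terms, your outline correctly identifies the broad strategy of the original proofs (a Nielsen-style induction on the total length $c(\phi)=\sum_{v}|\phi(v)|$, made possible by the normal form for $A_K$ and the description of centralizers via stars), but it defers the entire mathematical content to the one claim that actually constitutes the theorem, and that claim is both unproved and too strong as stated.

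Concretely, the assertion that whenever $c(\phi)$ is not minimal ``one of the leftmost or rightmost letters must be cancellable by a single transvection $t_{uv}$ or a single partial conjugation $c_{u,Y}$'' is precisely Servatius's conjecture, and it is not accessible by a local case analysis on the first and last blocks of one word $\phi(v)$. Two specific failures: (1) your reduction treats each $\phi(v)$ in isolation, but a partial conjugation $c_{u,Y}$ simultaneously conjugates every generator in the component $Y$, so a move that shortens $\phi(v)$ may lengthen $\phi(w)$ for other $w\in Y$; establishing that a globally length-non-increasing choice exists is the heart of Laurence's paper and requires his separate theorem that every \emph{conjugating} automorphism (one sending each generator to a conjugate of a generator) is a product of partial conjugations --- a case your dichotomy does not even reach, since there $|\phi(v)|>1$ yet no transvection is relevant and no single conjugation of a single letter applies. (2) Monotone descent by single elementary generators is exactly what fails in general and what later forced Day to introduce peak reduction with the compound Whitehead automorphisms $(A,a)$; Laurence circumvents this not by your local argument but by an induction organized around the domination order $\le$ and maximal vertices. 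You have correctly named where the difficulty lies, but naming it is not the same as resolving it: as written, the proposal is a plan whose decisive step is missing.
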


\subsection{Day's presentation of $\Aut(A_K)$} More recently, building on work of McCool \cite{McCool},  Day \cite{Day} gave an explicit finite presentation of $\Aut(A_K)$ in terms of {\em Whitehead automorphisms}, which we now briefly recall.

 Let $L = V(K) \cup V(K)^{-1} \subset A_K$, and consider the obvious extension to $L$ of the partial order $\leq$.
A {\em type {\rm(1)} Whitehead automorphism} is an automorphism of $A_K$ which is induced by a permutation of $L$.
A {\em type {\rm(2)} Whitehead automorphism} is determined by a subset $A\subset L$, plus an $a\in L$ with $a \in A$ but $a^{-1} \notin A$. Then we set $(A,a)(a) = a$ and, for $c \ne a$,
$$(A,a)(c)= \left \{\begin{aligned}
c, && \text{ if } c \notin A \text{ and } c^{-1} \notin A, \\
ca, && \text{ if } c \in A \text{ and } c^{-1} \notin A, \\
a^{-1}c, && \text{ if } c \notin A \text{ and } c^{-1} \in A ,\\
a^{-1}ca, && \text{ if } c \in A \text{ and } c^{-1} \in A .\\
\end{aligned}
\right.$$

We stress that not every choice of $A\subset L$ and $a \in L$ gives rise to an automorphism of $A_K$. In this direction, we have:

\begin{lemma}[\cite{Day}, Lemma {\rm2.5}]\label{welldef} Let $A\subset L$, and $a\in L$ with $a\in A$ but $a^{-1} \notin L$. Then $(A,a)\in \Aut(A_K)$ if and only if
\begin{itemize}
\item[{\rm(1)}] the set $K\cap A\cap A^{-1}\setminus \lk(a)$ is a union of connected components of $K \setminus \st(a)$,

\item[\rm{(2)}] for each $x\in A-A^{-1}$ we have $x\leq a$.
\end{itemize}
\end{lemma}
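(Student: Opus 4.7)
The plan is to express $(A,a)$ as a product of Laurence--Servatius generators, namely partial conjugations by $a$ and transvections $t_{\cdot, a}$, whose well-definedness is governed exactly by conditions (1) and (2). To set this up, partition the positive generators $V(K) \setminus \{a, a^{-1}\}$ into four classes depending on the membership of $x$ and $x^{-1}$ in $A$: the two-sided class $C := \{x \in V(K) : x,\, x^{-1} \in A\}$, the right-one-sided class $R := \{x : x \in A,\, x^{-1} \notin A\}$, the left-one-sided class $L' := \{x : x \notin A,\, x^{-1} \in A\}$, and the fixed class $F := V(K) \setminus (C \cup R \cup L' \cup \{a,a^{-1}\})$. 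From the defining formula, $(A,a)$ acts on these by $x \mapsto a^{-1}xa$, $x\mapsto xa$, $x\mapsto a^{-1}x$, and $x\mapsto x$ respectively.

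For the ``if'' direction, assume (1) and (2). Vertices of $C \cap \lk(a)$ are fixed by $a$-conjugation, so only $C \setminus \lk(a) = K \cap A \cap A^{-1} \setminus \lk(a)$ contributes nontrivially, and by (1) this is a union $Y_1 \sqcup \cdots \sqcup Y_k$ of connected components of $K \setminus \st(a)$. Thus the restriction of $(A,a)$ to $C$ is the composition of partial conjugations $c_{a, Y_i}$. On $R$ and $L'$, the action is realized by the transvections $t_{x,a}$ (for $x\in R$) and $t_{x^{-1}, a}$ (for $x \in L'$); both types of transvections are valid automorphisms because (2), combined with the extension of $\leq$ to $L$, ensures $x \leq a$ and $x^{-1}\leq a$ respectively. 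A direct computation on each generator shows that the composition of these elementary automorphisms agrees with $(A,a)$, so $(A,a) \in \Aut(A_K)$.

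For the ``only if'' direction, assume $(A, a) \in \Aut(A_K)$. To extract (2), fix $x \in A \setminus A^{-1}$ and any neighbor $y \in \lk(x)$ with $y \notin \{a, a^{-1}\}$: the relation $[x,y] = 1$ must be preserved, giving $[(A,a)(x), (A,a)(y)] = 1$. Specializing first to $y \notin A \cup A^{-1}$ (so $(A,a)(y) = y$), the identity $[xa, y] = 1$ in $A_K$ forces $[a,y]=1$, that is, $y \in \st(a)$; a short case analysis covering $y \in C$, $y \in R$, and $y \in L'$ reaches the same conclusion, so $\lk(x) \subset \st(a)$, i.e. $x \leq a$. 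To extract (1), fix $x \in K \cap A \cap A^{-1} \setminus \lk(a)$, lying in some connected component $Y$ of $K \setminus \st(a)$, and pick $y \in Y \cap \lk(x)$: examining the commutation $[a^{-1}xa, (A,a)(y)] = 1$ forces $y \in C$ as well. Propagating along edges in $Y$ inductively yields $Y \subset C$, which is exactly condition (1).

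The main obstacle I anticipate is the case analysis in the ``only if'' direction: one must rule out the possibility that an algebraic commutation among images is produced by some cancellation involving $a$, rather than reflecting a genuine graph relation on preimages. The resolution relies on the well-known fact that commutations between cyclically reduced elements of $A_K$ are dictated by the commutation graph, so algebraic identities like $[xa, y] = 1$ can be safely translated back into graph-theoretic constraints of the form $y \in \st(a)$ or $y \in C$. Once this bookkeeping is in place, the forward and backward directions become symmetric and the lemma follows.
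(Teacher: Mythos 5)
This lemma is imported verbatim from Day's paper (his Lemma 2.5) and is given no proof here, so there is no in-paper argument to compare yours against; note also that the hypothesis ``$a^{-1}\notin L$'' in the statement is a typo for ``$a^{-1}\notin A$'', which you have silently and correctly repaired. Your argument is the standard one and is correct in outline. The ``if'' direction is complete: after partitioning the generators into the two-sided, one-sided and fixed classes, $(A,a)$ is visibly the product of the partial conjugations $c_{Y_i,a}$ over the components supplied by (1) and of the (left and right) transvections supplied by (2); these commute pairwise, all fix $a$, and each is a Laurence--Servatius automorphism, so the product is an automorphism agreeing with $(A,a)$ on every generator. In the ``only if'' direction the real content is exactly the step you flag at the end: converting identities such as $[xa,y]=1$, $[xa,ya]=1$ or $[a^{-1}xa,y]=1$ into the graph conditions $y\in\st(a)$ or $y\in C$. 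To make this airtight you should cite a precise statement rather than the informal ``commutations are dictated by the graph'': either the shuffle solution to the word problem (two reduced words represent the same element iff related by swaps of adjacent commuting letters) or Servatius' Centralizer Theorem, which for a generator $v$ gives centralizer $\langle \st(v)\rangle$. With that input each case does close --- e.g.\ for $y\in\lk(x)$ fixed by $(A,a)$ one gets $a=x^{-1}(xa)$ in the centralizer of $y$, hence $y\in\st(a)$; and in the propagation step $[x,aya^{-1}]=1$ with $a\notin\st(y)$ would force $a\in\st(x)$, contradicting $x\notin\st(a)$, so $y$ must be two-sided. So the proposal is sound; it only needs the routine case computations written out against the stated normal-form facts.
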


\begin{remark}{\upshape  Observe that every Laurence--Servatius generator of $\Aut(A_K)$ may be expressed in terms of Whitehead automorphisms. This is clear for graphic automorphisms and inversions, which are type (1) automorphisms.

 In the case of partial conjugations, if $Y$ is a union of connected components of $K \setminus \st(a)$,
$$c_{Y,a}=(Y\cup Y^{-1}\cup a,a)$$
and in particular
$$c_a:=c_{K-a,a}=(L-a^{-1},a)$$

Finally, if $\tau_{ba}$ is a transvection (so, in particular, $b\leq a$) then
$$
\tau_{ba}=(\{a,b\},a).
$$
}
\end{remark}

In \cite{Day}, Day proved the following.

\begin{theorem}[\cite{Day}]
$\Aut(A_K)$ is the group generated by the set of all Whitehead automorphisms, subject to the following relations:

\begin{enumerate}
\item[(R1)] $(A,a)^{-1} = (A - a \cup a^{-1}, a^{-1})$,

\item[(R2)] $(A,a)(B,a) = (A \cup B, a)$ whenever $A \cap B = \{a\}$,

\item[(R3)] $(B, b)(A,a)(B,b)^{-1}  = (A,a)$, whenever $\{a,a^{-1}\} \cap B = \emptyset$, $\{b, b^{-1} \} \cap A = \emptyset$, and at least one of $A \cap B = \emptyset$ or $ b \in \lk(a)$ holds,

\item[(R4)] $(B, b)(A,a)(B,b)^{-1} =  (A,a)(B - b \cup a, a)$, whenever $\{a,a^{-1}\} \cap B = \emptyset$, $b \notin A$, $b^{-1} \in A$,  and at least one of $A \cap B = \emptyset$ or $ b \in \lk(a)$ holds,

\item[(R5)] $(A - a \cup a^{-1}, b)(A, a) = (A - b \cup b^{-1}, a) \sigma_{a,b}$, where $b \in A$,
$b^{-1} \notin A$, $b \ne a$ but $b \sim a$, and where $\sigma_{a,b}$ is the type {\rm(1)} automorphism such that $\sigma_{a,b}(a) = b^{-1}$, $\sigma_{a,b}(b) = a$, fixing the rest of generators.

\item[(R6)] $\sigma (A,a) \sigma^{-1}= (\sigma(A), \sigma(a))$, for every $\sigma$ of type {\rm(1)}.
\item[(R7)] All the relations among type {\rm(1)} Whitehead automorphisms.

\item[(R9)] $(A,a)(L - b^{-1}, b) (A,a)^{-1} = (L - b^{-1}, b)$, whenever $\{b,b^{-1}\} \cap A = \emptyset$, and

\item[(R10)]$(A,a)(L - b^{-1}, b) (A,a)^{-1}= (L - a^{-1}, a)( L - b^{-1}, b)$, whenever $b \in A$ and $b^{-1} \notin A$.
\end{enumerate}
\label{thm-day}
\end{theorem}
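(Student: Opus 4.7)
My plan is to follow the strategy of McCool for $\Aut(F_n)$, adapted to RAAGs. The proof splits naturally into three tasks: (a) the Whitehead automorphisms generate $\Aut(A_K)$; (b) each of (R1)--(R10) holds in $\Aut(A_K)$; and (c) these relations suffice to present the kernel of the map from the free group on Whitehead automorphisms to $\Aut(A_K)$.

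For (a), it suffices to observe, as recorded in the Remark preceding the theorem, that every Laurence--Servatius generator is a Whitehead automorphism: graphic automorphisms and inversions are of type (1); transvections have the form $(\{a,b\},a)$; and partial conjugations have the form $(Y \cup Y^{-1} \cup a,\,a)$. Lemma \ref{welldef} characterises which $(A,a)$ actually lie in $\Aut(A_K)$, so combined with the Laurence--Servatius theorem one gets generation. For (b), each of (R1)--(R10) reduces to a finite case analysis: evaluate both sides on an arbitrary $c \in L$ and split according to the membership of $c$ and $c^{-1}$ in $A$ and in $B$; the side conditions attached to each relation (constraints on $A \cap B$, on links, on $\sim$-equivalence, etc.) are exactly what is needed for the case comparisons to match. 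This is routine, if lengthy.

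The main obstacle is (c), the completeness of the relations, which I would tackle by a \emph{peak reduction} argument in the spirit of Whitehead and McCool. One equips $A_K$ (or its set of conjugacy classes of tuples) with a suitable Whitehead-type length function that respects the commutation relations encoded by $E(K)$. Given a word $\omega = (A_1,a_1)(A_2,a_2) \cdots (A_n,a_n)$ representing the identity in $\Aut(A_K)$, one tracks the length of a carefully chosen test element under the successive prefixes of $\omega$. A \emph{peak} at index $i$ is a place where the length strictly rises from position $i-1$ to $i$ and strictly falls from $i$ to $i+1$. The core technical lemma is that any peak $(A,a)(B,b)$ can be rewritten, modulo (R1)--(R10), as a product in which no intermediate length exceeds the endpoints; iterating eliminates all peaks. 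A peak-free word representing the identity can then be reduced, by induction on length, to a word involving only type (1) Whitehead automorphisms, where (R7) closes the argument.

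The delicate points of the peak reduction are (i) the choice of length function, so that transvections and partial conjugations interact predictably with the commutation relations; and (ii) the treatment of the equivalence $\sim$ on $V(K)$, which is the genuinely new feature compared to the free-group case. Relation (R5) is designed precisely to handle the swap of a pair of $\sim$-equivalent letters that can arise in the middle of a peak, and verifying that (R1)--(R10) suffice to resolve all such peaks is the heart of Day's proof in \cite{Day}. This is where I would expect to spend the bulk of the effort.
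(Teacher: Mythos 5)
This theorem is not proved in the paper at all: it is quoted verbatim from Day \cite{Day}, so the relevant benchmark is Day's actual argument, whose structure is visible in the paper's subsequent proof of the proposition on ${\Aut^\star}(A_K)$. Your parts (a) and (b) are fine and match the Remark and Lemma \ref{welldef} preceding the statement. The genuine gap is in your part (c): you propose a \emph{single, global} peak reduction over the full set of Whitehead automorphisms with respect to one length function, in direct analogy with Whitehead--McCool for $\Aut(F_n)$. For general RAAGs this is precisely what does not work, and circumventing it is the main structural novelty of \cite{Day}. Day proves peak reduction only for the \emph{long-range} Whitehead automorphisms $\Omega_l$ (type (1) maps together with those $(A,a)$ fixing every generator adjacent to $a$); the \emph{short-range} automorphisms $\Omega_s$ are presented by a separate argument (Proposition 5.5 of \cite{Day}, where the subgroup is analyzed directly, not by peak reduction), and the two presentations are glued via the factorization $\alpha=\beta\delta$ with $\beta\in\langle\Omega_s\rangle$, $\delta\in\langle\Omega_l\rangle$ (Theorem A.1 of \cite{Day}), whose proof by ``sorting substitutions'' itself consumes the relations (R1)--(R10). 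Without this long-range/short-range decomposition, your inductive step ``a peak-free word for the identity reduces to type (1) automorphisms'' has no available proof: the interaction of transvections along edges of $K$ with any conjugacy-length function is exactly the obstruction that forces Day to exclude short-range automorphisms from the peak-reduction machinery.

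Beyond this structural misdirection, even for the part where peak reduction does apply, the core lemma --- that every peak $(A,a)(B,b)^{-1}$ can be lowered using only (R1)--(R10) --- is the entire technical content of the theorem, and you explicitly defer it (``this is where I would expect to spend the bulk of the effort''). The delicate points you flag, namely the equivalence $\sim$ and the role of (R5), are indeed real (they resurface in this paper's restriction to ${\Aut^\star}(A_K)$, where subcase (3c) of Lemma 1.18 of \cite{Day} is exactly the step requiring $\sigma_{a,b}\in\operatorname{Sym}^1(A_K)$), but naming them is not the same as resolving them. As it stands, your proposal is a plausible roadmap with one wrong turn (global peak reduction) and its central lemma unproved.
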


\begin{remark}
{\upshape In Day's list of relations \cite{Day} there is an extra type of relator, which Day calls (R8); however, as he mentions in \cite{Day}, Remark 2.7, this relation is redundant and therefore we omit it from the list above.}
\end{remark}

\subsection{The group ${\Aut^\star}(A_K)$} From now on we will restrict our attention to an explicit finite-index subgroup of $\Aut(A_K)$, which we will denote by ${\Aut^\star}(A_K)$. Before introducing this subgroup, we need a definition.
Recall that a node is said to be thin if its equivalence class, with respect to the relation $\sim$, has only one element. Consequently, we call an inversion {\em thin} if it fixes every thin node; in other words, it is the inversion about a node that is not thin.


Now, let ${\Aut^\star}(A_K)$ be the subgroup of $\Aut(A_K)$ generated by transvections, partial conjugations, and thin inversions. Observe that ${\Aut^\star}(A_K)$ has finite index in $\Aut(A_K)$.

In \cite{AMP}, the first and fourth named authors proved that $\Aut(A_K)$ has a finite-index subgroup that surjects onto $\Z$. In that paper, it was claimed that one such finite-index subgroup is the one generated by transvections, partial conjugations, and {\em all} inversions, which was denoted $\Aut^0(A_K)$. However, the proof given in \cite{AMP} is not correct; this issue was fixed in an updated version of \cite{AMP} (see \cite{AMP-erratum}), where it was proved that $\Aut^\star(A_K)$ surjects to $\Z$.   In order to do so, one needs to prove  that Day's presentation can be restricted in the obvious way to a presentation for ${\Aut^\star}(A_K)$. For completeness, we include a proof here.
 Write  $\operatorname{Sym}^1(A_K)$ for the subgroup of $\Aut(A_K)$ consisting of those graphic automorphisms that preserve setwise the equivalence classes for $\sim$ and fix every node of $K$ that is thin. One has:

\begin{proposition} The group ${\Aut^\star}(A_K)$ has a finite presentation with generators the set of type {\rm(2)} Whitehead automorphisms and $\operatorname{Sym}^1(A_K)$, and relators {\rm(R1), (R2), (R3), (R4), (R5), (R9), (R10)} above together with
\begin{enumerate}
\item[{\rm(R6)'}] $\sigma (A,a) \sigma^{-1}= (\sigma(A), \sigma(a))$, for every $\sigma\in\operatorname{Sym}^1(A_K)$.
\item[{\rm(R7)'}] All the relations among automorphisms in $\operatorname{Sym}^1(A_K)$.
\end{enumerate}
\end{proposition}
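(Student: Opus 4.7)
Let $G$ denote the abstract group with the proposed presentation, and let $\phi\colon G\to \Aut^\star(A_K)$ be the natural homomorphism sending each named generator to the corresponding automorphism of $A_K$. The plan is to prove that $\phi$ is an isomorphism. Well-definedness is automatic, since each of the relators (R1)--(R5), (R6)', (R7)', (R9), (R10) is a particular case of a Day relator and hence holds in $\Aut(A_K)\supseteq \Aut^\star(A_K)$. For surjectivity, I need that $\operatorname{Sym}^1(A_K)$ and the type (2) Whitehead automorphisms generate $\Aut^\star(A_K)$. The inclusion $\Aut^\star(A_K)\subseteq \langle\operatorname{Sym}^1(A_K),\text{type }(2)\rangle$ is clear: transvections and partial conjugations are type (2) by the Remark following Lemma \ref{welldef}, while a thin inversion on a non-thin node $v$ is a type (1) automorphism which fixes every thin node (every node other than $v$) and preserves every $\sim$-class, hence lies in $\operatorname{Sym}^1(A_K)$. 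For the reverse inclusion, any type (2) automorphism decomposes into transvections and partial conjugations by Lemma \ref{welldef} combined with (R2); any element of $\operatorname{Sym}^1(A_K)$ which is not itself a thin inversion must, by Lemma \ref{lem:po}(ii), permute a class of $\sim$-related leaves sharing a common neighbour, and a standard $SL_2(\Z)$-style calculation in the associated (legal) transvections together with thin inversions realises such a permutation as a product of our generators.

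The main obstacle is injectivity. Given a word $w$ in the generators of $G$ that maps to the identity in $\Aut(A_K)$, Day's theorem writes $w$, viewed in the free group on the full set of Whitehead automorphisms, as a product of conjugates of (R1)--(R7), (R9), (R10). The task is to rewrite this expression as a product of conjugates of the restricted relators inside the free group on the generators of $G$. The relators (R1)--(R5), (R9), (R10) are already on our list: note that the type (1) factor $\sigma_{a,b}$ appearing in (R5) swaps two elements of a common $\sim$-class of size at least two, so fixes every thin node and preserves all $\sim$-classes, and therefore lies in $\operatorname{Sym}^1(A_K)$. The real difficulty is to eliminate every occurrence of (R6) with $\sigma\notin\operatorname{Sym}^1(A_K)$ and of (R7) involving type (1) automorphisms outside $\operatorname{Sym}^1(A_K)$.

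The key lemma to overcome this is the following: for every type (1) automorphism $\sigma\in\Aut(A_K)$ and every type (2) $(A,a)$, the image $(\sigma(A),\sigma(a))$ is again a type (2) automorphism, and the identity $\sigma(A,a)\sigma^{-1}=(\sigma(A),\sigma(a))$ can be derived using only (R1)--(R5), (R6)', (R9), (R10). I would prove this by decomposing $\sigma$ into elementary permutations of $L$: thin inversions and $\sim$-preserving swaps lie in $\operatorname{Sym}^1(A_K)$ and are handled directly by (R6)'; inversions on thin nodes and transpositions crossing $\sim$-classes, on the other hand, are treated by an explicit simulation of their conjugation effect on $(A,a)$ as a product of type (2) automorphisms built from (R5), (R9), and (R10). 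The hard part is clearly this final step, which will require a careful case-by-case analysis depending on how the elementary permutation interacts with the set $A$ and the letter $a$. With the lemma in hand, every occurrence of (R6) with $\sigma\notin\operatorname{Sym}^1(A_K)$ in the expression for $w$ is replaced by a derivation using the restricted relators; in the resulting expression every type (1) letter belongs to $\operatorname{Sym}^1(A_K)$, so the surviving (R7) relators reduce to (R7)', completing the proof.
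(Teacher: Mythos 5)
Your overall strategy (von Dyck's lemma for well-definedness and surjectivity, then rewriting Day's relator decomposition for injectivity) is the natural first attempt, but it leaves the entire difficulty of the proposition unresolved, and the sketched repair does not work as stated. If $w$ is a word in the generators of $G$ mapping to $1$, Day's theorem expresses $w=\prod_i u_i r_i^{\pm1}u_i^{-1}$ in the free group on \emph{all} Whitehead automorphisms; the conjugators $u_i$ will in general contain type (1) letters outside $\operatorname{Sym}^1(A_K)$, that is, letters which are not generators of $G$ at all and whose images need not even lie in $\Aut^\star(A_K)$. Replacing each \emph{relator} occurrence of (R6)/(R7) is therefore not enough: you must also eliminate those letters from the conjugators, which is the standard Reidemeister--Schreier obstruction to presenting a subgroup, and your proposal never addresses it. Relatedly, your ``key lemma'' is ill-posed: for $\sigma\notin\operatorname{Sym}^1(A_K)$ the word $\sigma(A,a)\sigma^{-1}(\sigma(A),\sigma(a))^{-1}$ is not a word in the generators of $G$, so it cannot be ``derived from the restricted relators'' in any direct sense; and the step you yourself flag as hard --- simulating the conjugation action of an inversion on a thin node, or of a transposition crossing $\sim$-classes, via type (2) automorphisms and (R5), (R9), (R10) --- is exactly where the content of the proposition lies and is not carried out (nor is it clear it is available in that form, since (R5) only produces the permutations $\sigma_{a,b}$ with $a\sim b$, which already lie in $\operatorname{Sym}^1(A_K)$).

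The paper takes a different and essentially unavoidable route: it reruns Day's own proof inside $\Aut^\star(A_K)$. Concretely, it uses Day's Theorem A.1 to factor any $\alpha\in\Aut^\star(A_K)$ into a short-range and a long-range part, checking that the sorting substitutions used in that factorization involve only relators from the restricted list and never push an element out of $\Aut^\star(A_K)$; it then quotes Day's Proposition 5.5 for the subgroup generated by short-range automorphisms, and for the long-range part it invokes the peak reduction algorithm together with Day's Remark 3.22, which guarantees that elements of $\Aut^\star(A_K)$ can be peak-reduced using elements of $\Aut^\star(A_K)$ only, the sole type (1) automorphisms ever introduced being the $\sigma_{a,b}$ of (R5), which lie in $\operatorname{Sym}^1(A_K)$. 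To salvage your approach you would have to reproduce an equivalent of this peak-reduction bookkeeping; as written, the proposal is a plan with its central step missing.
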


%
%
%
%

\begin{proof}
First, it follows directly from  the definition that $\Aut^\star(K)$ is generated by all the type (2) Whitehead automorphisms, and every thin inversion. Thanks to relator (R5), we may add the elements of $\operatorname{Sym}^1(A_K)$ to this list of generators.

Let $R^1$ be the list (R1)--(R10) of Day's relators, except that (R6) and~(R7) are substituted by (R6)' and (R7)'. Observe that every relation in $R^1$ is indeed a relation in ${\Aut^\star}(A_K)$. Therefore, it remains to justify why these form a complete set of relations in ${\Aut^\star}(A_K)$.

By Theorem A.1 of \cite{Day}, every automorphism $\alpha \in \Aut(A_K)$ may be written as a product $\alpha= \beta \delta$, where $\beta$ lies in the subgroup of  $\Aut(A_K)$ generated by {\em short-range} automorphisms, and $\delta$ is in the subgroup generated by {\em long-range} automorphisms.  Here, we say that $\gamma \in \Aut(A_K)$ is {\em long-range} if either it is a type (1) Whitehead automorphism, or it is a type (2) Whitehead automorphism  specified by a subset $(A,v)$ such that $\gamma$ fixes all the elements adjacent to~$v$ in~$K$. Similarly, we say that  $\gamma \in A_K$ is {\em short-range} if it is a type (2) Whitehead automorphism specified by a subset $(A, v)$ and $\gamma$ fixes all the elements of $K$ not adjacent to $v$. Following Day, we denote by $\Omega_l$ (resp. $\Omega_s$) the set of all long-range (resp. short-range) automorphisms.

Consider now $\alpha \in {\Aut^\star}(A_K)$, and observe that all short-range automorphisms are in ${\Aut^\star}(A_K)$. The proof of the splitting in Theorem A.1 of \cite{Day} is based in the so called {\em sorting substitutions} in \cite{Day}, Definition 3.2. Of these, only substitution (3.1) involves an element possibly not in ${\Aut^\star}(A_K)$, and this element is just moved along, meaning that if our initial string consists  solely of elements in ${\Aut^\star}(A_K)$, then so does the final  string. Moreover, observe that the relators needed for these moves all lie in $R^1$ (an explicit list of the relators needed, case by case, can be found in Lemma~3.4 of~\cite{Day}). All this implies that up to conjugates of relators in $R^1$, we may write $\alpha= \beta \delta$, with  $\beta$ in the subgroup of  ${\Aut^\star}(A_K)$ generated by $\Omega_s$, and $\delta$  in the subgroup generated by  $\Omega_l^1=\Omega_l \cap {\Aut^\star}(A_K)$.

By Proposition 5.5 of \cite{Day}, the subgroup of ${\Aut^\star}(A_K)$ generated by $\Omega_s$ has a presentation whose every generator is a short-range automorphism or an element of $\operatorname{Sym}^1(A_K)$, and whose every relator lies in $R^1$. Indeed, in the proof of Proposition~5.5 in~\cite{Day}, the generators that we need to add to $\Omega_s$ to get the desired presentation are precisely the elements of the form $\sigma_{ab}$ of~(R5), which belong to $\operatorname{Sym}^1(A_K)$.

In addition, the subgroup ${\Aut^\star}(A_K)$ generated by $\Omega^1_l $ has a presentation whose every relator is in $R^1$. To see that this is indeed the case, first recall from Proposition 5.4 of \cite{Day} that the subgroup of $\Aut(A_K)$ generated by $\Omega_l$ admits a presentation in which every relation (also in the list (R1)--(R10) of Theorem \ref{thm-day}) is written in terms of $\Omega_l$. In order to prove this, Day uses a certain inductive argument called the {\em peak reduction algorithm}. However, by Remark 3.22 of \cite{Day}, every element of ${\Aut^\star}(A_K)$ may be peak-reduced using elements of ${\Aut^\star}(A_K)$ {\em only}. Indeed, the only subcase of Remark 3.22 in \cite{Day} that is problematic in this setting is the use of subcase~(3c) of Lemma~1.18 in~\cite{Day}. But the relator used in that subcase is precisely (R5), where the type~(1) Whitehead automorphism is $\sigma_{ab}$, and thus lies in $\operatorname{Sym}^1(A_K)$.

Moreover, the process of peak reduction needs relators in $R^1$ only; this is a consequence of the fact, observed already in Remark 3.22 of \cite{Day}, that  type~(1) Whitehead automorphisms are only moved around when lowering peaks, and if they lie in $\Omega_l^1$ then the needed relator is precisely (R5), where the type~(1) Whitehead automorphism is $\sigma_{ab}$ and thus lies in $\operatorname{Sym}^1(A_K)$.
\end{proof}

\subsection{Proof of Theorem \ref{mainthm:hom}}

In what follows we will assume that $T$ is a tree with at least 3 nodes. Recall that $\partial T$ denotes the set of leaves of $T$, that is, the set of nodes of degree one.
Before embarking in the proof of Theorem~\ref{mainthm:hom}, we make some preliminary observations.

First, an immediate consequence of Lemma \ref{lem:po} is that if $a$ is a deep node of $T$, then there is no transvection of the form $\tau_{ca}$. Furthermore, recall that the same lemma implies that the $\sim$-equivalence classes with more than one element consist precisely of sets of $k\ge 2$ leaves adjacent to a same node. Thus the subgroup of $\Aut(A_T)$ whose elements are those graphic automorphisms which (setwise) preserve these classes  is generated by the graphic automorphisms that fix the whole~$T$, apart from two leaves adjacent to a same node, which are possibly interchanged by an involution.

%

Let $a\in V(T)$. Observe that the number of partial conjugations of the form $c_{Y,a}$ coincides with the number of connected components of $T \setminus \st(a)$.
 Moreover, this number can be computed as
$$
\sum_{c\in\lk(a)}(\mathrm{degree}(c)-1).
$$
Set \[\Omega=\bigcup\{ c_{Y,a}\mid  d(a,\partial T)\ge 3, Y\text{ connected component of }T \setminus \st(a)\}\}.\]
Finally, in order to relax notation, we will simply write $H_1$ instead of $H_1({\Aut^\star}(A_T), \Z)$. After all this notation, Theorem \ref{mainthm:hom} will be a consequence of the following stronger result.

\begin{theorem}
Let $\pi\colon {\Aut^\star}(A_T) \to H_1$ be the abelianization map. Then $\pi(\Omega)$ is a linearly independent set in $H_1$.
\label{prop:H_1}
\end{theorem}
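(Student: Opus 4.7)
The plan is to construct, for each $(Y_0, a_0) \in \Omega$, a homomorphism $\phi_{Y_0, a_0} \colon {\Aut^\star}(A_T) \to \Z$ whose value on $c_{Y_0, a_0}$ is $1$ and whose value on every other element of $\Omega$ vanishes; the existence of such a family of homomorphisms immediately witnesses the $\Z$-linear independence of $\pi(\Omega)$ in $H_1$.

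Using the presentation of ${\Aut^\star}(A_T)$ provided by the proposition, I would define $\phi_{Y_0, a_0}$ on generators by setting $\phi_{Y_0, a_0}((A,v)) = 1$ when $v = a_0$ and $Y_0 \subseteq A$, $\phi_{Y_0, a_0}((A,v)) = -1$ when $v = a_0^{-1}$ and $Y_0 \subseteq A$, and $\phi_{Y_0, a_0} = 0$ on every other type~(2) Whitehead automorphism and on all of $\operatorname{Sym}^1(A_T)$. The well-definedness of the condition ``$Y_0 \subseteq A$'' rests on a preliminary structural observation: since $a_0$ is deep, Lemma~\ref{lem:po}(i) rules out any $b \neq a_0$ with $b \leq a_0$, and together with condition~(2) of Lemma~\ref{welldef} this forces $A - A^{-1} = \{a_0\}$; the component condition in Lemma~\ref{welldef}(1) then forces $A \cap Y_0$ to be either $Y_0$ or empty. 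Equivalently, each Whitehead automorphism $(A, a_0)$ is parameterised by a union $\mathcal{Y}$ of components of $T \setminus \st(a_0)$ together with a subset $P \subseteq \lk(a_0)$, and $\phi_{Y_0,a_0}$ simply records whether $Y_0 \in \mathcal{Y}$.

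It then remains to verify that $\phi_{Y_0,a_0}$ kills every abelianized relator. Relator~(R1) is enforced by the sign convention; (R2) uses that the condition $A \cap B = \{a_0\}$ prevents $Y_0$ from being split between $A$ and $B$; relators~(R3) and~(R9) abelianize trivially. In (R5) and~(R7)' all generators in play have letter different from $a_0^{\pm 1}$, since Lemma~\ref{lem:po}(ii) implies $a_0$ is thin, so every term vanishes. For~(R4) and~(R10), one checks via the same structural constraint $A - A^{-1} = \{a_0^{\pm 1}\}$ combined with Lemma~\ref{lem:po}(i) that the hypotheses $b^{-1} \in A$ with $b \in B$ (in~(R4)) and the existence of $b \neq a$ with $b \leq a$ (in~(R10)) cannot be realised when the relation would impose a constraint on a generator at $a_0^{\pm 1}$. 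Hence these relators impose no non-trivial condition on $\phi_{Y_0,a_0}$.

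The most delicate step is~(R6)', which requires $\phi(A, a) = \phi(\sigma(A), \sigma(a))$ for every $\sigma \in \operatorname{Sym}^1(A_T)$. The key claim here is that every such $\sigma$ preserves each component of $T \setminus \st(a_0)$ setwise. Indeed, since $a_0$ is deep, no vertex of $\st(a_0)$ can have a leaf as neighbour (such a leaf would be at distance at most $2$ from $a_0$), so every $\sim$-equivalence class of size at least two consists of leaves hanging off a common node $u \in T \setminus \st(a_0)$; both $u$ and its leaf-neighbours then lie inside one and the same component of $T \setminus \st(a_0)$, and $\sigma$ cannot exchange components. I expect this verification, together with the case analyses underpinning~(R4) and~(R10), to be the main obstacle, since in each relator the depth of $a_0$ has to be invoked in a slightly different guise in order to rule out the dangerous configurations.
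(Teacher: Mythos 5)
Your proposal is correct and takes essentially the same route as the paper's own proof: the authors likewise work from the restricted Day presentation, use the fact that a deep node $a$ admits no $b\le a$ to decompose each $(A,a)$ into partial conjugations from $\Omega$, and kill the relators (R4), (R5), (R10) and (R6)' by exactly the depth/thinness arguments you describe. The only difference is cosmetic: they assemble your family of coordinate functionals $\phi_{Y_0,a_0}$ into a single surjection onto $\Z^{|\Omega|}$.
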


Accepting momentarily the validity of Theorem \ref{prop:H_1}, we now explain how to deduce Theorem {\rm\ref{mainthm:hom}} from it:

\begin{proof}[Proof of Theorem {\rm\ref{mainthm:hom}}]
In the light of the discussion before Theorem \ref{prop:H_1}, we have that the cardinality of $\Omega$ is equal to
\[\Upsilon(T)=\sum_{v\in D(T)} \sum _{w\in \lk(v)} (\deg(w) -1),\] where again $D(T)$ denotes the set of deep nodes of $T$. Thus the result follows from Theorem~\ref{prop:H_1}.
\end{proof}

Finally, we prove Theorem \ref{prop:H_1}:

\begin{proof}[Proof of Theorem {\rm \ref{prop:H_1}}] Let $n$ be the cardinality of $\Omega$ and consider the map
$$\begin{aligned}
\varphi:\Omega&\to\bigoplus_{c\in\Omega}\Z\\
c&\mapsto 1_c.\\
\end{aligned}$$
We claim that this map can be extended to a well defined epimorphism ${\Aut^\star}(A_T)\twoheadrightarrow\Z^{|\Omega|}$. To show this, we will first extend $\varphi$ to the set of Whitehead automorphisms that generate ${\Aut^\star}(A_T)$ and then check that Day relators are preserved. In order to do so, we map all automorphisms in $\operatorname{Sym}^1(A_T)$ to 0.

Consider an arbitrary type (2) Whitehead automorphism $(A,a)$.
If there is some leaf $b$ such that $d(a,b)\leq 2$, then we map $(A,a)\mapsto 0$. Otherwise, assume first that $a$ is a node of $T$. Using Lemma \ref{welldef} and relators (R2) we may write $(A,a)$ as a product of partial conjugations $c_{Y,a}\in\Omega$ (observe that there is no element $b\leq a$) and the set of possible $Y's$ appearing in this expression is uniquely determined from $A$. We define the image of $(A,a)$ in the obvious way using this expression; note that the last observation implies that this is well defined. Finally, in the case when $a^{-1}\in T$, set $\varphi(A,a)=-\varphi(A-a\cup a^{-1},a^{-1})$.
 Now we have an extended map which we also denote $\varphi$ and claim that it respects Day relators.  We do not have to worry about (R1) and (R2) because of the way $\varphi$ is defined. About (R3) and (R9), they are preserved because $\Z^n$ is abelian. For (R6)' and (R7)' we only have to consider elements in $\operatorname{Sym}^1(A_T)$.  Relator (R7)' is not an issue either, because all the terms therein vanish.
  So we are left with (R4), (R5) and (R10) and (R6)'. About (R4), as $\Z^n$ is abelian we only have to check that $\varphi$ maps $(B-b\cup a,a)$ to 0, but this is obvious because the facts that $b\not\in A$, $b^{-1}\in A$ and that $(A,a)$ is well defined imply $b\leq a$, hence $b$ is a leaf and $d(a,b)\leq 2$.  Exactly the same argument works for (R5) and (R10): in the case of (R5) we have $a\sim b$, thus both are leaves and everything is mapped to $0$. And in the case of (R10), we know that $b,b^{-1}\in A$, and that $(A,a)$ is well defined; thus $b\leq a$, and we argue as before to conclude that $(L-a^{-1},a)$ is mapped to 0.

  At this point, we only have to consider (R6)'. We claim that if $a$ is a deep node, and $(A,a)$ is well defined, then $(\sigma(A),\sigma(a))=(A,a)$ for any $\sigma\in\operatorname{Sym}^1(A_T)$; note that this will imply that $\varphi$ preserves (R6)'. In fact, it suffices to show the claim for $A=Y\cup Y^{-1}\cup a$ and $Y$ a connected component of $T \setminus \st(a)$. As $T$ is a tree, such a $Y$ must have more than one element and must itself be a tree with a node linked to $a$ that we can see as its root. Moreover, if $c\sim b$ are leaves in $T$ and one of them happens to be in $Y$ then so is the other. Therefore $\sigma(Y\cup Y^{-1})=Y\cup Y^{-1}$. On the other hand, since $a$ is thin we have that $\sigma(a)=a$, by the definition of    $\operatorname{Sym}^1(A_T)$, so the claim follows.
\end{proof}

%
%
%

\subsection{A remark on the bound given by Theorem \ref{mainthm:hom}} Before continuing, we stress that the lower bound given by Theorem \ref{mainthm:hom} is most definitely not sharp. On the other hand, not every element of ${\Aut^\star}(A_T)$ projects to a non-trivial element of $H_1$. In this direction, we have:

\begin{lemma}\label{lem:trivialH1-1} Let $T$ be a tree, and $\Aut^*(A_T)\to H_1$ the abelianization map. The following elements have trivial image:
\begin{itemize}
\item[i)] Every transvection $t_{da}=(\{d,a\},a)$ satisfying that: \begin{itemize}
\item either $a$ is a leaf, and there is a third leaf $b\notin \{a,d\}$ such that $a,b,d$ have a common neighbor.

\item $d$ is adjacent to $a$, and there is a leaf $b\ne d$ adjacent to $a$.

\item[ii)] Partial conjugations $c_{Y,a}$ where $a$ is a leaf and there is a second leaf $b\neq a$ such that $a,b$ have a common neighbor.
\end{itemize}
\end{itemize}
\end{lemma}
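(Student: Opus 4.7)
The strategy is to apply Day's relators from the presentation of $\Aut^\star(A_T)$ given in the proposition above; since $H_1$ is abelian, every conjugation-type relator (R4), (R9), (R10) degenerates into a single-generator equation, and it is from such degenerate equations that the claimed vanishings will follow.

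For (i), I would use (R4) with $A = \{a, b^{-1}\}$ and $B = \{d, b\}$. In both subcases, the hypotheses combined with Lemma \ref{lem:po} yield $b \leq a$ and $d \leq b$: in subcase 1 because $a$, $b$, $d$ are three leaves sharing a common neighbor; in subcase 2 because for $t_{da}$ to be a well-defined transvection one already needs $d$ to be a leaf adjacent to $a$ (alongside $b$). Hence $(A,a) = (\{a,b^{-1}\}, a)$ and $(B,b) = t_{db}$ are well-defined Whitehead automorphisms, the remaining combinatorial hypotheses of (R4) are immediate, and (R4) gives
\[
(B,b)(A,a)(B,b)^{-1} \;=\; (A,a)\,(B - b \cup a, a) \;=\; (A,a)\,t_{da};
\]
projecting to $H_1$ kills the left-hand side and forces $[t_{da}] = 0$.

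For (ii), first use (R2) to write $c_{Y,a}$ as the product of the partial conjugations $c_{Z,a}$ over the connected components $Z$ of $T \setminus \st(a)$ contained in $Y$, reducing to single components. If $Z \neq \{b\}$, the same (R4) strategy applies with $B = Z \cup Z^{-1} \cup b$ and $A = \{a, b^{-1}\}$: well-definedness of $(B,b) = c_{Z,b}$ follows from the observation that the connected components of $T \setminus \st(a)$ and of $T \setminus \st(b)$ coincide once the singletons $\{b\}$ and $\{a\}$ are set aside, so any such $Z$ is also a union of components of $T \setminus \st(b)$. This yields $[c_{Z,a}] = 0$.

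The case $Z = \{b\}$ is the main obstacle, because the presence of $b$ inside the support of $c_{b,a}$ blocks a direct (R4) move. To circumvent this, introduce the inner automorphism $\iota_b = (L - b^{-1}, b)$; by (R2) it factors as the product of the $c_{Z',b}$ over all components $Z'$ of $T \setminus \st(b)$, and by the previous paragraph applied with the roles of $a$ and $b$ exchanged, every such $[c_{Z',b}]$ with $Z' \neq \{a\}$ vanishes in $H_1$, leaving $[\iota_b] = [c_{a,b}]$. Now apply (R10) with $(A,b) = t_{ab} = (\{b,a\}, b)$, which is well-defined because $a \leq b$: one obtains $t_{ab}\,\iota_a\,t_{ab}^{-1} = \iota_b\,\iota_a$, and abelianization forces $[\iota_b] = 0$, hence $[c_{a,b}] = 0$. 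Finally, the graphic automorphism $\tau$ swapping $a$ and $b$ lies in $\operatorname{Sym}^1(A_T)$ (since $a \sim b$ and both are non-thin), so by (R6)' it conjugates $c_{b,a}$ to $c_{a,b}$, giving $[c_{b,a}] = [c_{a,b}] = 0$. Note that this detour is only available once the other partial conjugations about $b$ have been independently trivialized, which dictates the order in which the two subcases of (ii) must be handled.
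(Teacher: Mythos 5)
Your proof is correct and follows essentially the same route as the paper: relator (R4) with $A=\{a,b^{-1}\}$ disposes of case (i) and of the non-singleton components in case (ii), while (R10) combined with the (R2)-decomposition of the full conjugation handles the remaining singleton component. The only difference is cosmetic: in case (ii) the paper applies (R10) directly to kill $[c_a]$ and reads off $[c_{b,a}]=0$ from $c_a=c_{b,a}\prod_i c_{Y_i,a}$, whereas you run the mirror argument at $b$ to get $[c_{a,b}]=0$ and then transfer back via the swap in $\operatorname{Sym}^1(A_T)$ and (R6)', an extra step that is valid but unnecessary.
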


In order to prove the lemma, we will mainly use relators (R4) and (R10). It will be useful to reformulate them as follows (we emphasize that this reformulation does not make use of the hypothesis that $T$ is a tree).

\begin{enumerate}
\item[(R4)] Let $B_1\subseteq L$ be such that $(B_1,a)$ is well defined. 
Assume that there is some $b\in L$ with $b\leq a$ and $b,b^{-1}\not\in B_1$ such that $(B_1-a\cup b, b)$ is well defined 
 and that for some $A\subseteq L$ we have $(A,a)$ well defined,  $b \notin A$, $b^{-1} \in A$, 
  and at least one of $A \cap B_1 = \{a\}$ or $ b \in \lk(a)$ holds, Then
  $$(B_1,a)\text{ vanishes in }H_1.$$


\item[(R10)] Let $b,a\in L$ such that $b\leq a$. Then
$$c_a\text{ vanishes in }H_1,$$ where $c_a$ denotes conjugation (of every node of $T$) by $a$.
\end{enumerate}

We are now ready to prove Lemma \ref{lem:trivialH1-1}:

\begin{proof}[Proof of Lemma {\rm\ref{lem:trivialH1-1}}] First, note that since $t_{da}$ is defined, then $d$ is necessarily a leaf by Lemma \ref{lem:po}.
Moreover, in both cases we have $b\leq a$, and thus the element $(A,a)$, with $A=\{b^{-1},a\}$, is well defined. Now, in case i) let $B_1=\{d,a\}$ so $t_{da}=(B_1,a)$. As the hypothesis implies $d\leq b$, we see that  $(B_1-a\cup b, b)=(\{d,b\},b)$ is well defined, thus using (R4) we deduce that $t_{da}$ vanishes in $H_1$.

Consider now case ii). Let
$$
T-\st(a)=\{b\}\sqcup Y_1\sqcup\cdots\sqcup Y_t
$$
be the partition of $T \setminus \st(a)$ into connected components. Observe that the connected components of $T \setminus \st(b)$ are precisely
$$
T-\st(b)=\{a\}\sqcup Y_1\sqcup\cdots\sqcup Y_t
$$
also.
For any $i$, set
$B_1=Y_i\cup Y_i^{-1}\cup a$ and as before $A=\{b^{-1},a\}$. Using (R4) we deduce that $c_{Y_i,a}$ vanishes in $H_1$. Moreover, the fact that $b\leq a$ implies by (R10) that $c_a$ also vanishes in $H_1$, and as an iterated use of (R2) implies
$$c_a=c_{b,a}\prod c_{Y_i,a},$$
we see that the same happens for $c_{b,a}$.
\end{proof}

As a consequence, we may easily exhibit a class of trees $T$ for which the first Betti number of $\Aut^\star(A_T)$ vanishes.
\begin{lemma}\label{lemma:trees with betti null} Let $T$ be a tree such that every node is either a leaf or it has at least three leaves as neighbors. Then $b_1(\Aut^\star(A_T))=0$.
\end{lemma}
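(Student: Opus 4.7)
The plan is to show that $H_1 := H_1(\Aut^\star(A_T),\Z)$ is a torsion group, from which $b_1(\Aut^\star(A_T)) = 0$ follows immediately. By the Proposition above, $\Aut^\star(A_T)$ is generated by type (2) Whitehead automorphisms together with $\operatorname{Sym}^1(A_T)$, so it suffices to verify the claim on these generators. Elements of $\operatorname{Sym}^1(A_T)$ live in a finite group, and thin inversions are involutions, so both project to torsion in $H_1$ automatically. The substantive task is to show that every transvection and every partial conjugation vanishes in $H_1$; for this I rely on Lemma \ref{lem:trivialH1-1}, strengthened in two places by the very same reformulated (R4)-argument used in its proof.

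For a transvection $t_{da} = (\{d,a\},a)$, Lemma \ref{lem:trivialH1-1}(i) already handles the cases when $a$ is a leaf (the common neighbor $u$ of $a$ and $d$ is non-leaf and, by hypothesis, has $\ge 3$ leaf neighbors, supplying the required third leaf) and when $d$ is adjacent to non-leaf $a$ (the hypothesis on $a$ itself supplies a second leaf neighbor). The remaining sub-case is $a$ non-leaf with $d(d,a) = 2$; here I let $u$ be the unique neighbor of $d$, so that $u \in \lk(a)$ and $u$ is non-leaf (being adjacent to both $a$ and $d$), and use the hypothesis to pick a leaf $b \ne d$ adjacent to $u$. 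Setting $B_1 = \{d,a\}$ and $A = \{b^{-1},a\}$, the reformulated (R4) applies word-for-word: one has $b \le a$, $b, b^{-1} \notin B_1$, $(B_1 - a \cup b, b) = t_{db}$ is well-defined because $d \le b$, $(A,a)$ is well-defined because $b \le a$, $b \notin A$, $b^{-1} \in A$, and $A \cap B_1 = \{a\}$. Thus $t_{da}$ vanishes in $H_1$.

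For a partial conjugation $c_{Y,a}$, Lemma \ref{lem:trivialH1-1}(ii) dispatches the case $a$ a leaf. When $a$ is non-leaf, $Y$ is a component of $T \setminus \st(a)$ obtained as the subtree rooted at some $w \in \lk(v) \setminus \{a\}$ after deleting a non-leaf neighbor $v$ of $a$. Since $v$ has $\ge 3$ adjacent leaves by hypothesis, I may pick a leaf $b$ adjacent to $v$ with $b \notin Y$ (choosing $b \ne w$ in the case that $w$ is itself a leaf). With $B_1 = Y \cup Y^{-1} \cup \{a\}$ and $A = \{b^{-1}, a\}$, the reformulated (R4) applies again: the essential geometric observation is that $Y$ is exactly one connected component of $T \setminus \st(b) = T \setminus \{b, v\}$, so by Lemma \ref{welldef} the automorphism $(B_1 - a \cup b, b) = c_{Y, b}$ is well-defined; the remaining conditions ($b \le a$, $b, b^{-1} \notin B_1$, $A \cap B_1 = \{a\}$) all follow from $b \notin Y$ and $a \notin Y$. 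Hence $c_{Y,a}$ vanishes in $H_1$. I expect this last step to be the main obstacle, because it extends Lemma \ref{lem:trivialH1-1} beyond its literal statement; the key idea is to pick the auxiliary leaf $b$ adjacent to the ``gateway'' vertex $v$ so that passing from $\st(a)$ to $\st(b)$ cleanly isolates $Y$ as a single component, exactly as required by Lemma \ref{welldef}.
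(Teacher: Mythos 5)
Your proof is correct, and at the level of strategy it coincides with the paper's: reduce to the Whitehead generators, dispose of the type (1) part as torsion, and kill transvections and partial conjugations via Lemma \ref{lem:trivialH1-1} and the reformulated relator (R4). The differences are in the execution of the two (R4) applications, and they are worth recording. First, you explicitly treat the transvection sub-case where $a$ is not a leaf and $d(d,a)=2$; this sub-case is not literally covered by either bullet of Lemma \ref{lem:trivialH1-1}(i), and the paper's proof passes over it with the bare assertion that case i) ``follows from the hypothesis and Lemma \ref{lem:trivialH1-1}'' — your direct (R4) argument with $b$ a leaf adjacent to the common neighbor $u$ is the right patch. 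Second, and more substantially, for a partial conjugation $c_{Y,a}$ with $a$ not a leaf the paper takes the auxiliary leaf $b$ adjacent to $a$ itself and enlarges $B_1$ to $Y\cup Y^{-1}\cup z\cup z^{-1}\cup a$, where $z$ is the gateway from $a$ to $Y$; for $(B_1-a\cup b,b)$ to be well defined via Lemma \ref{welldef} one then needs $Y\cup\{z\}$ to be a union of connected components of $T\setminus\st(b)=T\setminus\{a,b\}$, and this fails whenever $z$ has more than one neighbor besides $a$ — which, under the hypothesis of the lemma, it always does, since $z$ is not a leaf and hence has at least three leaf neighbors. Your choice of $b$ as a leaf adjacent to the gateway $v$ with $b\notin Y$ avoids this entirely, because $Y$ is genuinely a single connected component of $T\setminus\st(b)=T\setminus\{b,v\}$, and all the remaining (R4) hypotheses ($b\le a$ by Lemma \ref{lem:po}, $b,b^{-1}\notin B_1$, $A\cap B_1=\{a\}$) are immediate. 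In short, your argument is not just a valid alternative: in the partial-conjugation step it repairs a genuine defect in the paper's own proof.
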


\begin{proof} Recall that a consequence of Day's presentation is that $\Aut^\star(A_T)$ is generated by certain type (1) Whitehead automorphisms, which have finite order, and the following two kinds of type (2) Whitehead automorphisms:
\begin{itemize}
\item[i)] Transvections $\tau_{y,a}=(\{y,a\},a)$ with $y\leq a$,

\item[ii)] Partial conjugations $c_{Y,a}=(Y\cup Y^{-1}\cup a,a)$ with $Y$ a connected component of $T-\st_T(a)$.
\end{itemize}
Therefore it suffices to check that both types of elements i) and ii) vanish in~$H_1$. In case i) this follows from the hypothesis and Lemma
\ref{lem:trivialH1-1}. The same happens in case ii) unless $a$ is not a leave. But then take a leaf $b$ that is adjacent to $a$, and $z\in\st(a)$ the node that connects $a$ to $Y$. Observe that in a similar way as we did in Lemma \ref{lem:trivialH1-1}, putting $A=\{b^{-1},a\}$ and $B_1=Y\cup Y^{-1}\cup z\cup z^{-1}\cup a$ relator (R4) implies that $c_{Y,a}$ vanishes in $H_1$. (Note that $(B_1,a)$ and $(Y\cup Y^{-1}\cup a,a)$ both represent the element $c_{Y,a}$ but we need the first one to ensure that $(B_1-a\cup b,b)$ is well defined).
\end{proof}

Before closing this section, we briefly discuss an example of a type of tree $T$ such that $\Aut(A_T)$ has infinitely many finite-index subgroups with zero Betti number. Specifically, suppose $T$ contains a vertex with degree $n$, and $n$ leaves as neighbors. Then $A_T=\Z\times F_n$, where the $\Z$-factor is generated by the vertex of degree $n$. This group satisfies properties (B1) y (B2) in \cite{AMP}, and thus, by Theorem~1.1 in that paper, we obtain that $b_1(H)=0$ for any $H\leq\Aut(A_T)$ of finite index containing the Torelli subgroup.

In the light of these results, 
a natural question is:

\begin{question}
Let $T$ be a tree. What is the exact value of
$
b_1({\Aut^\star}(A_T))?
$
\end{question}

\section{Deep nodes and shallow trees}\label{sec:deep and shallow}

Recall that a node $v$ of a tree $T$ is called  deep if $\partial_v\ge 3$, that the collection of deep nodes of $T$ is denoted $D(T)$,  and that a tree $T$ with no deep nodes is termed  {shallow}.

Some examples of shallow  trees and trees with deep nodes follow; colors indicate distance to the boundary.
\begin{center}
\resizebox{12.6cm}{!}{\includegraphics{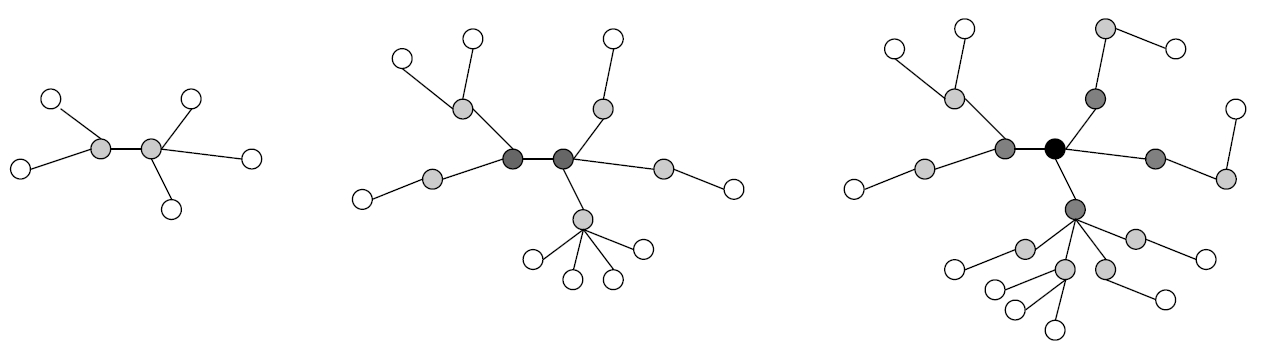}}
\end{center}

The class of rooted labeled trees is denoted by $\mathcal{T}$, while the class of general (unrooted) labeled trees is denoted by $\mathcal{U}$. The respective subclasses of trees with nodes labeled with $\{1, \dots, n\}$ are denoted by $\mathcal{T}_n$ y $\mathcal{U}_n$, for each $n \ge 1$. Cayley's theorem tells us that $$t_n:= |\mathcal{T}_n|=n^{n-1}\,,\quad \mbox{for $n \ge 1$}\, ,$$ and that $$u_n:= |\mathcal{U}_n|=n^{n-2}\,,\quad \mbox{for $n \ge 1$}\, .$$

We endow $\mathcal{U}_n$ with the uniform probability distribution; claiming that a certain property occurs with probability $p$ in $\mathcal{U}_n$ is tantamount to claiming that the proportion of trees in $\mathcal{U}_n$ satisfying that property is $p$.

\subsection{Notation and some basic results}
\label{sect:symbolic method}



Given a sequence $(a_n)_{n=0}^\infty$, its \textit{$($ordinary$)$ generating function} (ogf, for short) is the power series $f(z)$ given~by
$$
f(z)=\sum_{n=0}^\infty a_n\, z^n
$$
for all $z\in \mathbb{D}(0,\varepsilon)$, for some $\varepsilon >0$. We will write $a_n=\textsc{coef}_n(f(z))$.

The function $g(z)$ is the
\textit{exponential generating function} (for short, egf) of the sequence~$(a_n)$ if
$$
g(z)=\sum_{n=0}^\infty \frac{a_n}{n!}\, z^n
$$
for all $z\in \mathbb{D}(0,\varepsilon)$, for some $\varepsilon >0$.

\smallskip

A basic tool for handling combinatorial questions about trees is the Lagrange inversion formula.

\begin{lem}[Lagrange inversion formula]\label{lema:Lagrange inversion} Let $h(z)$ and $f(z)$ be two holomorphic functions on some neighborhood of $z=0$, say $\mathbb{D} (0,\varepsilon )$, such that $f(0)\not =0$, and
$$
h(z)= z f(h(z))
$$
in $\mathbb{D}(0,\delta )$.
Then, for any function $g$ holomorphic at~$0$,
$$
\mbox{\textsc{coef}}_n\left[g(h(z))\right]=\mbox{\textsc{coef}}_{n-1}\Big[g'(z)
\frac{f(z)^n}{n}\Big]\, , \qquad \mbox{for each $n \ge 1$}\,.
$$
\end{lem}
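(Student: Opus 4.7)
The plan is to prove the formula analytically, via Cauchy's residue theorem combined with a change of variable, following the classical approach. The first step is to note that $h(0)=0$ (forced by the functional equation at $z=0$) and that $h'(0)=f(h(0))=f(0)\neq 0$ (from implicit differentiation of $h(z)=zf(h(z))$). Hence $h$ is a biholomorphism from some disc $\mathbb{D}(0,\rho)$ onto a neighborhood of the origin, and from $h(z)=zf(h(z))$ one reads off that its local inverse is precisely the map $w\mapsto w/f(w)$.

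Having established this, I would write the coefficient via Cauchy's integral formula: for a small positively oriented circle $\gamma$ around $0$ inside $\mathbb{D}(0,\rho)$,
$$\mbox{\textsc{coef}}_n[g(h(z))]=\frac{1}{2\pi i}\oint_{\gamma}\frac{g(h(z))}{z^{n+1}}\,dz,$$
and then apply the change of variables $w=h(z)$. The image $\gamma'=h(\gamma)$ is a simple loop winding once around $0$, because $h$ is an orientation-preserving biholomorphism fixing $0$; since $z=w/f(w)$, the integral becomes
$$\frac{1}{2\pi i}\oint_{\gamma'}g(w)\left(\frac{f(w)}{w}\right)^{n+1}d\!\left(\frac{w}{f(w)}\right).$$

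The crucial algebraic manipulation is the identity
$$\left(\frac{f(w)}{w}\right)^{n+1}d\!\left(\frac{w}{f(w)}\right)=-\frac{1}{n}\,d\!\left(\frac{f(w)^n}{w^n}\right),$$
valid wherever $f(w)\neq 0$, which is a direct computation. Substituting and integrating by parts---the boundary term $g(w)\,f(w)^n/w^n$ contributes nothing since $\gamma'$ is a closed contour and the integrand is single-valued meromorphic---transforms the integral into
$$\frac{1}{2\pi i}\oint_{\gamma'}\frac{g'(w)\,f(w)^n}{n\,w^n}\,dw,$$
which is exactly $\mbox{\textsc{coef}}_{n-1}[g'(w)f(w)^n/n]$ by another application of Cauchy's formula.

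The proof is essentially bookkeeping rather than a deep obstruction; the main care needed is to track holomorphy on a sufficiently small disc so that all contour manipulations are legal, and to verify that $\gamma'$ winds once around $0$ (which is where the hypothesis $f(0)\neq 0$ is really used). An alternative route would be to first establish the classical form $\mbox{\textsc{coef}}_n[h(z)^k]=(k/n)\,\mbox{\textsc{coef}}_{n-k}[f(z)^n]$ by taking $g(z)=z^k$, and then extend to general $g$ by linearity in its Taylor coefficients; but the residue computation above handles the general statement in one uniform stroke.
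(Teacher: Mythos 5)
Your proof is correct and complete: the computation $h(0)=0$, $h'(0)=f(0)\neq 0$, the identification of the local inverse as $w\mapsto w/f(w)$, the differential identity $\bigl(f(w)/w\bigr)^{n+1}\,d\bigl(w/f(w)\bigr)=-\tfrac{1}{n}\,d\bigl(f(w)^n/w^n\bigr)$, and the integration by parts around the closed contour are all sound, and the hypotheses ($f(0)\neq 0$, shrinking the disc so that $f\neq 0$ and $h$ is injective) are used exactly where needed. The paper states this lemma without proof, as a classical fact (it is the standard Lagrange inversion formula, cf.\ Flajolet--Sedgewick), so there is no argument in the paper to compare against; your residue-theorem derivation is the standard analytic proof of it.
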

Note that $h(0)=0$.

\

\noindent {\bf Trees and generating functions.} We let $T(z)$ denote the egf of the sequence $(t_n)$, namely:
\begin{equation}\label{eq:Cayley rooted egf}
T(z)=\sum_{n=0}^\infty \frac{t_n}{n!}\, z^n,\quad |z|<\frac{1}{e}.
\end{equation}
Cayley's formula says that $T$ satisfies the following implicit equation:
\begin{equation}\label{eq:Cayley formula}
T(z)=z\, e^{T(z)}.
\end{equation}
For the class of  unrooted trees $\mathcal{U}$, we denote its egf~by
\begin{equation}\label{eq:Cayley unrooted egf}
U(z)=\sum_{n=0}^\infty \frac{u_n}{n!}\, z^n,\quad |z|<\frac{1}{e}.
\end{equation}

As an immediate corollary of Lemma \ref{lema:Lagrange inversion} we state:
\begin{cor}\label{cor:coefn of Cayleytoj}
For $n,k\ge 1$,
$$
\textsc{coef}_n\big[T(z)^k\big]=\frac{k}{n}\, \frac{n^{n-k}}{(n-k)!}.
$$
\end{cor}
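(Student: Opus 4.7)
The plan is to apply the Lagrange inversion formula (Lemma \ref{lema:Lagrange inversion}) directly to the functional equation $T(z) = z\, e^{T(z)}$ satisfied by the egf of rooted labeled trees. In the notation of the lemma, I would set $h(z) = T(z)$ and $f(z) = e^z$, noting that $f(0) = 1 \neq 0$, so the hypotheses of Lagrange inversion are satisfied.

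To extract the coefficient of $T(z)^k$, I would then choose $g(z) = z^k$, which is holomorphic at $0$ with derivative $g'(z) = k\, z^{k-1}$. Applying the Lagrange inversion formula gives
$$
\textsc{coef}_n\bigl[T(z)^k\bigr] = \textsc{coef}_{n-1}\!\left[k\, z^{k-1} \cdot \frac{e^{nz}}{n}\right] = \frac{k}{n}\, \textsc{coef}_{n-k}\bigl[e^{nz}\bigr].
$$
Since $e^{nz} = \sum_{j\ge 0} n^j z^j/j!$, the coefficient of $z^{n-k}$ is $n^{n-k}/(n-k)!$, which yields the stated formula.

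There is no real obstacle here: once the Lagrange inversion formula is in hand and the functional equation $T(z) = z\, e^{T(z)}$ is recognized, the computation is a direct substitution. The only minor point to verify is that the identity is valid for all $n, k \ge 1$, including the edge case $k = n$ (where $(n-k)! = 0! = 1$ and one recovers $\textsc{coef}_n[T(z)^n] = 1/n \cdot 1 \cdot 1$, consistent with $T(z)^n \sim z^n$ as $z \to 0$ by \eqref{eq:Cayley rooted egf}).
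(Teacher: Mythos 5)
Your proposal is correct and is exactly the argument the paper has in mind: it states the corollary as an ``immediate'' consequence of Lemma \ref{lema:Lagrange inversion} applied to Cayley's equation $T(z)=z\,e^{T(z)}$ with $f(z)=e^z$ and $g(z)=z^k$, which is precisely your computation. Your sanity check of the case $k=n$ is a nice touch but not needed.
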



\

\noindent {\bf Stirling numbers.} Write $S(n,k)$ for the (double) sequence of the Stirling numbers of the second kind. We shall use the following identities.
For $k\ge 1$,
\begin{equation}\label{eq:exp-sum of S(n,k) in n}
\sum_{n=0}^\infty \frac{S(n,k)}{n!}\, x^n = \frac{1}{k!} \, (e^x-1)^k.
\end{equation}
Notice that
\begin{equation}
\label{eq:product of reciprocal of factorials}
\frac{S(n,k)}{n!}=\frac{1}{k!}\ \sum_{\substack{q_1,\dots, q_k\ge 1\\q_1+\cdots +q_k=n}}\frac{1}{q_1!\cdots q_k!}.
\end{equation}


Also,
\begin{equation}\label{eq:double-sum of S(n,k)}
\sum_{k,n\ge 0} S(n,k)\,  \frac{x^n}{n!}\, y^k = e^{y(e^x-1)}.
\end{equation}
Taking a derivative with respect to $x$ in \eqref{eq:double-sum of S(n,k)}, we get
\begin{equation}\label{eq:double-sum of S(n,k)1dev}
\sum_{k\ge 0,n\ge 1} S(n,k) \, \frac{x^{n-1}}{(n-1)!}\, y^k = y\,e^x\, e^{y(e^x-1)};
\end{equation}
and multiplying by $x$ and differentiating again with respect to $x$,
\begin{equation}\label{eq:double-sum of S(n,k)2dev}
\sum_{k\ge 0,n\ge 1} S(n,k) \, n\, \frac{x^{n-1}}{(n-1)!}\, y^k = y\,e^x\, e^{y(e^x-1)}\, (1+x+xy\, e^x).
\end{equation}

\subsection{Deep nodes}

Our objective now is to  study how abundant  deep nodes are in a typical labeled tree with $n$ nodes, as $n \to \infty$. Our argument starts analyzing rooted trees (sections \ref{section:distance 3 from the root} and \ref{section:mean of the sum}), and then settles (section~\ref{section:from rooted to unrooted}) the same question about unrooted trees, which is the more relevant case for our purposes.

\subsubsection{Proportion of rooted trees with the root at distance $\ge 3$ to the border}\label{section:distance 3 from the root}
Recall that $\mathcal{T}$ denotes the class of all rooted trees and that $\mathcal{T}_n$ denotes the subclass of rooted trees labeled with $\{1, \dots, n\}$.
Again, we endow $\mathcal{T}_n$ with the uniform probability distribution. Probabilities and expectations, denoted by $\P_n$ and $\E_n$, refer to this probability space. Recall that $t_n=|\mathcal{T}_n|=n^{n-1}$.

Call $\mathcal{T}^{(3)}$ the subclass of rooted trees whose root is a deep node, $\partial_{\text{root}}\ge 3$. In such trees,
the root has, say, $k\ge 1$ descendants, which in turn have $q_1,\dots, q_k\ge 1$ descendants, none of which is a leaf (this guarantees distance $\ge 3$ from the root to the leaves). Call $N=q_1+\cdots +q_k$.

\begin{figure}[h]
\centering
\resizebox{8cm}{!}{\includegraphics{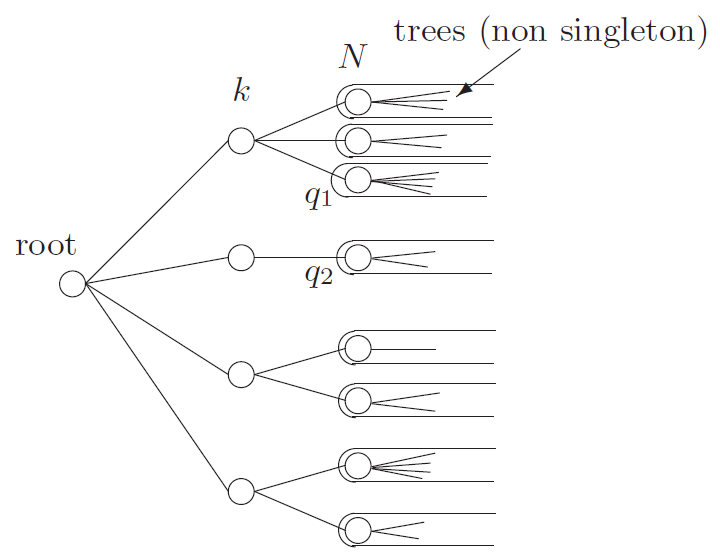}}
\caption{Trees with $\partial_{\rm root}\ge 3$.}\label{fig:tree3}
\end{figure}

Consider the following subclasses of $\T$:
\begin{align*}
\mathcal{T}_{k,q_1,\dots, q_k}^{(3)}&=\Big\{\text{\begin{tabular}{c}
$k$ descendants of the root, with $q_1,\dots, q_k\ge 1$
\\
descendants, respectively, and $\partial_{\text{root}}\ge 3$\end{tabular}}\Big\}
\\
\T_{k,N}^{(3)}&=\Big\{\text{\begin{tabular}{c}
$k$ descendants of the root, $N$ nodes
\\
in the second generation, $\partial_{\text{root}}\ge 3$\end{tabular}}\Big\}
\\&
=\bigcup_{\substack{q_1,\dots, q_k\ge 1\\q_1+\cdots+q_k=N}} \T_{k,q_1,\dots, q_k}^{(3)}.
\end{align*}
Observe that
$$
\mathcal{T}^{(3)}=\bigcup_{k\ge 1} \bigcup_{N\ge k} \T_{k,N}^{(3)}.
$$
In all cases, an extra subindex $n$ would indicate the corresponding subclass of trees with nodes labeled with $\{1,\dots,n\}$.

We have the following asymptotic result.
\begin{thm}\label{thm:prob of delta3}
$$
\lim_{n\to\infty} \mathbf{P}_n(\T_n^{(3)})=\frac{1}{e}\, e^{-1/e} \,e^{(e^{1-1/e}-1)/e}=: c_3.
$$
\end{thm}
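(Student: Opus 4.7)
The plan is to compute the exponential generating function $T^{(3)}(z)$ of the class $\T^{(3)}$ via the symbolic method, express it as $G(T(z))$ for a function $G$ of the Cayley series, and then apply Lagrange inversion (Lemma \ref{lema:Lagrange inversion}) to read off the asymptotics of $t_n^{(3)}/t_n$.

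The first step is a three-layer decomposition: a rooted tree has $\partial_{\text{root}}\ge 3$ precisely when the root has $\ge 1$ children, every child of the root has $\ge 1$ children of its own, and every grandchild of the root has $\ge 1$ children (great-grandchildren may freely be leaves, since they sit at distance $3$). Hence each grandchild-subtree is an arbitrary rooted tree of size $\ge 2$, with egf $T(z)-z$; each child-subtree is the child together with a nonempty set of such, with egf $z(e^{T(z)-z}-1)$; and the whole tree is the root with a nonempty set of child-subtrees, giving
\[
T^{(3)}(z)=z\bigl(e^{z(e^{T(z)-z}-1)}-1\bigr).
\]

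Substituting $\tau=T(z)$ and $z=\tau e^{-\tau}$ yields $T^{(3)}(z)=G(\tau)$, where
\[
G(\tau)=\tau e^{-\tau}\bigl(e^{\psi(\tau)}-1\bigr),\qquad \psi(\tau):=\tau e^{-\tau}\bigl(e^{\tau(1-e^{-\tau})}-1\bigr),
\]
and both are entire in $\tau$. Since $T=ze^{T}$, Lagrange inversion gives $[z^n]G(T(z))=\tfrac{1}{n}[\tau^{n-1}]G'(\tau)e^{n\tau}$; expanding $G'(\tau)=\sum_{k\ge 0}g_k\tau^k$ and using $[\tau^{n-1-k}]e^{n\tau}=n^{n-1-k}/(n-1-k)!$, together with $t_n=n^{n-1}$, leads to
\[
\mathbf{P}_n(\T_n^{(3)})=\frac{t_n^{(3)}}{n^{n-1}}=\sum_{k=0}^{n-1}g_k\prod_{j=1}^{k}\frac{n-j}{n}.
\]

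Each product $\prod_{j=1}^{k}(n-j)/n$ is bounded by $1$ and tends to $1$ as $n\to\infty$, while $G$ being entire forces $\sum_k|g_k|<\infty$. Dominated convergence then gives $\lim_n\mathbf{P}_n(\T_n^{(3)})=\sum_{k\ge 0}g_k=G'(1)$. Using $\frac{d}{d\tau}(\tau e^{-\tau})|_{\tau=1}=0$ and $\frac{d}{d\tau}[\tau(1-e^{-\tau})]|_{\tau=1}=1$, the derivative $G'(1)$ collapses to $\alpha(1)\,e^{\psi(1)}\,\psi'(1)$ with $\alpha(1)=1/e$, $\psi(1)=(e^{1-1/e}-1)/e$, and $\psi'(1)=e^{-1/e}$, yielding exactly $c_3$. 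The main obstacle is getting the three-layer decomposition faithful to the condition $\partial_{\text{root}}\ge 3$ (keeping track of which layers force strict non-leaf status and which do not) and justifying cleanly the interchange of limit and infinite sum; the residual algebra at $\tau=1$ is then routine.
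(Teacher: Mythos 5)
Your argument is correct, and it reaches $c_3$ by a genuinely different route from the paper's. The paper decomposes $\T^{(3)}$ into the subclasses $\T^{(3)}_{k,q_1,\dots,q_k}$, computes the limit of each $\P_n(\T^{(3)}_{n;k,q_1,\dots,q_k})$ separately via Corollary~\ref{cor:coefn of Cayleytoj} and a binomial expansion of $(T(z)-z)^N$, and then resums over all $(k,q_1,\dots,q_k)$ using the Stirling-number identity \eqref{eq:double-sum of S(n,k)1dev}, with dominated convergence justifying the interchange. You instead start from the closed-form egf $\Psi_3(z)=z\bigl(e^{\Psi_2(z)}-1\bigr)$ --- which the paper records in \eqref{eq:Psi3} but sets aside, remarking only that the Schur--Sz\'asz product trick used for $\Psi_2$ fails here --- substitute $\tau=T(z)$ to write $\Psi_3=G(T(z))$ with $G$ entire, and apply Lagrange inversion once to get $\P_n(\T_n^{(3)})=\sum_{k}g_k\prod_{j=1}^k(1-j/n)\to G'(1)$. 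This is sound: your three-layer decomposition agrees with the paper's (summing the paper's subclass egfs over all $(k,q_1,\dots,q_k)$ gives exactly your $\Psi_3$), the dominated-convergence step works because an entire $G'$ has absolutely summable Taylor coefficients at $\tau=1$, and your evaluation of $G'(1)$ (using that $\frac{d}{d\tau}(\tau e^{-\tau})$ vanishes at $\tau=1$) reproduces $c_3$ exactly. What your route buys is brevity, the complete avoidance of Stirling numbers, and a general transfer principle $\lim_{n\to\infty} n!\,\textsc{coef}_n[G(T(z))]/n^{n-1}=G'(1)$ for entire $G$, which, fed into the recurrence \eqref{eq:recurrence of Psi} rewritten as $G_k(\tau)=\tau e^{-\tau}\bigl(e^{G_{k-1}(\tau)}-1\bigr)$, would answer the question the paper poses about $\T_n^{(k)}$ for all $k\ge 4$ as well. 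What the paper's longer route buys is the family of intermediate limits \eqref{eq:prob of AkN3}, indexed by $(k,N)$, which are reused verbatim to prove Theorem~\ref{thm:mean of N}; to recover that result along your lines you would need a bivariate $G$ marking the second-generation size $N$, which is doable but not automatic from what you wrote.
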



\begin{proof}
The egf of the class $\mathcal{T}_{k,q_1,\dots, q_k}^{(3)}$ is
$$
z\, \frac{z^k}{k!}\, \frac{(T(z)-z)^{q_1}}{q_1!}\cdots \frac{(T(z)-z)^{q_k}}{q_k!}=\frac{z^{k+1}}{k!}\, (T(z)-z)^N\, \frac{1}{q_1!\cdots q_k!},
$$
and so,
$$
\P_n(\T_{n;k,q_1,\dots, q_k}^{(3)})=\frac{|\T_{n;k,q_1,\dots, q_k}^{(3)}|}{n^{n-1}}=\frac{n!}{n^{n-1}}\ \frac{1}{q_1!\cdots q_k!}\, \frac{1}{k!}\ \textsc{coef}_n[z^{k+1}\, (T(z)-z)^N].
$$

Now, writing $a_j=(k+1)+(N-j)$ and using corollary \ref{cor:coefn of Cayleytoj},
\begin{align*}
\textsc{coef}_n\big[z^{k+1}\, (T(z)-z)^N\big]&=\textsc{coef}_{n-k-1}\Big[\sum_{j=0}^N {N\choose j} T(z)^j \, (-1)^{N-j}\, z^{N-j}\Big]
\\
&=\sum_{j=0}^N {N\choose j}  (-1)^{N-j}\, \textsc{coef}_{n-a_j}\big[T(z)^j\big]
\\
&=\sum_{j=0}^N {N\choose j}  (-1)^{N-j}\, \frac{j}{n-a_j}\, \frac{(n-a_j)^{n-a_j-j}}{(n-a_j-j)!}.
\end{align*}
This yields
\begin{align}\nonumber
&\P_n(\T_{n; k,q_1,\dots, q_k}^{(3)})
\\ \label{eq:Pn(Akq1...qk3)}
&\qquad =\frac{n!}{n^{n-1}}\,\frac{1}{q_1!\cdots q_k!}\, \frac{1}{k!}\ \sum_{j=0}^N {N\choose j}  (-1)^{N-j}\, \frac{j}{n-a_j}\, \frac{(n-a_j)^{n-a_j-j}}{(n-a_j-j)!}.
\end{align}

Notice that
\begin{align}\nonumber
\frac{n!}{n^{n-1}}\, \frac{1}{n-a_j}\, &\,\frac{(n-a_j)^{n-a_j-j}}{(n-a_j-j)!}
\\
\label{eq:cuenta para Pn(Akq1...qk3)}
&=\frac{n}{n-a_j}\, \frac{n(n-1)\cdots (n-a_j-j+1)}{n^{a_j+j}}\Big(1-\frac{a_j}{n}\Big)^{n-a_j-j},
\end{align}
which tends to $e^{-a_j}$ when $n\to\infty$.

This gives, recalling that $a_j=(k+1)+(N-j)$, that
\begin{align*}
\lim_{n\to\infty} \P_n(\T_{n; k,q_1,\dots, q_k}^{(3)})&=\frac{1}{q_1!\cdots q_k!}\, \frac{1}{k!}\ \sum_{j=0}^N {N\choose j}  (-1)^{N-j} j\, e^{-a_j}
\\
&=\frac{1}{q_1!\cdots q_k!}\, \frac{1}{k!}\, e^{-N-(k+1)}\ \sum_{j=0}^N {N\choose j}  (-1)^{N-j}\, j\, e^{j}
\\
&=\frac{1}{q_1!\cdots q_k!}\, \frac{1}{k!}\, e^{-(k+1)}\, N\, \Big(1-\frac{1}{e}\Big)^{N-1},
\end{align*}
where in the last step we have used the binomial theorem.

Now, summing in all tuples $q_1,\dots, q_k\ge 1$ with sum $N$, we get
\begin{align}\nonumber
\lim_{n\to\infty} \P_n(\T_{n;k,N}^{(3)})&=\frac{1}{k!}\, e^{-(k+1)}\, N\, \Big(1-\frac{1}{e}\Big)^{N-1}\ \sum_{\substack{q_1,\dots, q_k\ge 1\\q_1+\cdots+q_k=N}}
\frac{1}{q_1!\cdots q_k!}
\\
&=e^{-(k+1)}\, S(N,k)\, \frac{1}{(N-1)!}\, \Big(1-\frac{1}{e}\Big)^{N-1},\label{eq:prob of AkN3}
\end{align}
using \eqref{eq:product of reciprocal of factorials}.

Finally, summing over $k$ and $N$, we get
\begin{align}\nonumber
\lim_{n\to\infty} \P_n(\T_n^{(3)})&\stackrel{(\star)}{=}\sum_{k,N}e^{-(k+1)}\, S(N,k)\, \frac{1}{(N-1)!}\, \Big(1-\frac{1}{e}\Big)^{N-1}
\\ \label{eq:prob of A3}
&=\frac{1}{e}\ \sum_{k,N}S(N,k)\, \frac{(1-1/e)^{N-1}}{(N-1)!}\, \Big(\frac{1}{e}\Big)^k =
\frac{1}{e}\, \frac{1}{e}\,e^{1-1/e} \,e^{(e^{1-1/e}-1)/e}
\end{align}
(for the last identity, use
\eqref{eq:double-sum of S(n,k)1dev} with $x=1-1/e$ and $y=1/e$).

To justify the interchange of limit and (the double) sum in $(\star)$, we observe, from~\eqref{eq:Pn(Akq1...qk3)} and~\eqref{eq:cuenta para Pn(Akq1...qk3)}, that
$$
|\P_n(\T_{n;k,q_1,\dots, q_k}^{(3)})|\le \frac{1}{q_1!\cdots q_k!}\, \frac{1}{k!}\ \sum_{j=0}^N {N\choose j}\, j=\frac{1}{q_1!\cdots q_k!}\, \frac{1}{k!}\ N\,2^{N-1},
$$
and so
\begin{align*}
|\mathbf{P}_n(\T_{n;k,N}^{(3)})|&\le \frac{N}{k!}\, 2^{N-1}\sum_{\substack{q_1,\dots, q_k\ge 1\\q_1+\cdots+q_k=N}}\frac{1}{q_1!\cdots q_k!}
\\
&\le \frac{N}{k!}\ 2^{N-1}\, \textsc{coef}_N[e^{kz}]=\frac{1}{2}\, \frac{1}{k!}\, \frac{(2k)^N}{(N-1)!}.
\end{align*}
As
$$
\sum_{k\ge 1}\sum_{N\ge k} \frac{1}{k!}\, \frac{(2k)^N}{(N-1)!}<+\infty,
$$
dominated convergence justifies $(\star)$.
\end{proof}

\begin{remark}[Rooted labeled trees with the root farther away from the leaves]
{\upshape For $k\ge 0$, denote by $\T^{(k)}$ the subclass of rooted trees in which the root is, at least, $k$ units away from the boundary ($\partial_{\rm root}\ge k$). Write $\Psi_k(z)$ for its egf.



For $k=0$, $\T^{(0)}=\T$, and the corresponding egf is just the Cayley's function, $\Psi_0(z)=T(z)$.

The symbolic method (see \cite{FS}) gives that the sequence $(\Psi_k(z))$ of egfs satisfies the recurrence relation
\begin{equation}\label{eq:recurrence of Psi}
\Psi_0(z)=T(z)\,,\quad \Psi_k(z)=z\,\big(e^{\Psi_{k-1}(z)}-1\big),\quad k\ge1.
\end{equation}
To see this, take a tree in $\T^{(k)}$, delete its root (and the edges departing from it), and observe that we are left with a non-empty set of rooted trees in~$\T^{(k-1)}$.

In particular, using Cayley's formula \eqref{eq:Cayley formula}, we get
\begin{equation}\label{eq:Psi1}
\Psi_1(z)=z\, \big(e^{\Psi_{0}(z)}-1\big)=z\big(e^{T(z)}-1\big)=T(z)-z.
\end{equation}
and
\begin{equation}\label{eq:Psi2}
\Psi_2(z)=z\, \big(e^{\Psi_{1}(z)}-1\big)=z\big(e^{T(z)-z}-1\big)=T(z)\, e^{-z}-z.
\end{equation}
In the latter case, the particular structure of $\Psi_2(z)$ allows to obtain the asymptotic behaviour of its coefficients in a direct manner (avoiding a combinatorial argument similar to that used in the proof of Theorem~\ref{thm:prob of delta3}), using a trick of Schur and Sz\'asz  (see \cite{FS},  Theorem VI.12, p.\ 434). The result in this case is that
$$
\lim_{n\to\infty} \mathbf{P}_n(\T_n^{(2)})=e^{-1/e}\approx 0\mbox{.}6922.
$$
For $k=3$, instead,
\begin{equation}\label{eq:Psi3}
\Psi_3(z)=z\, \big(e^{\Psi_{2}(z)}-1\big)=z\, e^{T(z) e^{-z}}\, e^{-z}-z,
\end{equation}
and the simple approach sketched above for $k=2$ does not work. That is why we had to go through the combinatorial argument of the proof of Theorem~\ref{thm:prob of delta3}, to obtain
$$
\lim_{n\to\infty} \mathbf{P}_n(\T_n^{(3)})=\frac{1}{e}\, e^{-1/e} \,e^{(e^{1-1/e}-1)/e}\approx 0\mbox{.}3522.
$$}
\end{remark}

Notice that the {\upshape height} of a rooted tree is the {\em maximum} distance from the root to the leaves, while the distance $\partial_{\rm  root}$ is the {\em minimum} distance from the root to the leaves. The egfs $\Phi_k(z)$ of trees of height $\le k$ satisfy
\begin{equation}\label{eq:recurrence of Phi}
\Phi_0(z)=z\,,\quad \Phi_k(z)=z\,e^{\Phi_{k-1}(z)},\quad k\ge1.
\end{equation}
The asymptotics of the proportion that rooted trees of height $k$ occupy in~$\T_n$ is well known, starting with the R\'enyi--Szekeres analysis of \eqref{eq:recurrence of Phi} (see~\cite{RS}).

It would be nice to have a general analogous analysis of the recurrence \eqref{eq:recurrence of Psi} that could lead to an answer for:
\begin{question}
For $k\ge 4$, and as $n \to \infty$, what is the proportion that trees in $\T_n^{(k)}$ do occupy in $\T_n$?
\end{question}

\subsubsection{Mean of the sum of degrees of descendants of the root}\label{section:mean of the sum}
In the same probability space $\T_n$ (rooted trees labeled with $\{1, \dots, n\}$, with uniform distribution), consider the random variable
$$
Y_n(T)=\unogrande_{\{\partial_{\text{root}}\ge 3\}}\cdot N_n(T)=\left\{\begin{array}{cl}
N_n(T)&\text{if $\partial_{\text{root}}\ge 3$,}
\\
0&\text{otherwise,}
\end{array}\right.
$$
where $N_n(T)$ is the number of nodes in the second generation of the graph~$T$ counted from the root (see Figure \ref{fig:tree3}). Observe that
$$
N_n(T)=\sum_{v \in  \lk(\text{\rm root of $T$})} ({\rm deg}(v)-1).
$$

The following asymptotic result holds.
\begin{thm}\label{thm:mean of N}
$$
\lim_{n\to\infty} \E_n(Y_n)=\Big[2-\frac{1}{e}+\frac{1}{e}\, \Big(1-\frac{1}{e}\Big)\,e^{1-1/e}\Big]=: d_3.
$$
\end{thm}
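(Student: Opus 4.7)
The plan is to adapt the template of the proof of Theorem \ref{thm:prob of delta3}, this time with each tree weighted by the number $N$ of grandchildren of the root. Since $Y_n$ vanishes off $\T_n^{(3)}$ and takes the constant value $N$ on each $\T^{(3)}_{n;k,N}$, the starting point is
\[
\E_n(Y_n) = \sum_{k \geq 1}\sum_{N \geq k} N \cdot \P_n(\T^{(3)}_{n;k,N}),
\]
into which I will feed the pointwise asymptotic
\[
\lim_{n\to\infty} \P_n(\T^{(3)}_{n;k,N}) = e^{-(k+1)}\, S(N,k)\, \frac{(1-1/e)^{N-1}}{(N-1)!}
\]
established in equation \eqref{eq:prob of AkN3}.

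The second step will be to exchange $\lim_n$ with the double sum via dominated convergence, using the bound from the end of the proof of Theorem \ref{thm:prob of delta3} --- namely $\P_n(\T^{(3)}_{n;k,q_1,\dots,q_k}) \leq \tfrac{N\, 2^{N-1}}{k!\,q_1!\cdots q_k!}$ --- now multiplied by an extra factor of $N$. Summing over tuples $(q_1,\ldots,q_k)$ with $\sum q_i = N$ yields a majorant of the form $\tfrac{1}{k!}\,\tfrac{N^2 (2k)^N}{(N-1)!}$, whose double sum $\sum_{k\geq 1}\sum_{N \geq k}$ converges by the same kind of estimate used before.

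Once the interchange is justified, the task reduces to evaluating
\[
\sum_{k \geq 1,\, N \geq k} N\, e^{-(k+1)}\, S(N,k)\, \frac{(1-1/e)^{N-1}}{(N-1)!}.
\]
Pulling out the factor $1/e$ from $e^{-(k+1)} = (1/e)(1/e)^k$, and using that $S(N,0) = 0$ for $N \geq 1$ to extend the sum to $k \geq 0$ without change, I recognise the resulting double series as exactly the left-hand side of identity \eqref{eq:double-sum of S(n,k)2dev} evaluated at $x = 1-1/e$ and $y = 1/e$. That identity supplies the distinctive bracket $(1 + x + xy e^x)$, which at these values equals $2 - 1/e + (1/e)(1-1/e)\,e^{1-1/e} = d_3$, and the remaining exponential prefactors assemble to give the stated value.

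The main obstacle will be the dominated-convergence step: the extra factor $N$ slightly weakens the summability of the majorant from Theorem \ref{thm:prob of delta3}, so one extra power of $N$ must be absorbed by the factorials $1/(N-1)!$ and $1/k!$. Conceptually, however, the proof differs from that of Theorem \ref{thm:prob of delta3} in exactly one place --- the application of identity \eqref{eq:double-sum of S(n,k)2dev} (which carries the non-trivial factor $1 + x + xy e^x$) in place of identity \eqref{eq:double-sum of S(n,k)1dev} (where that factor is absent) --- and it is precisely that bracket which produces $d_3$.
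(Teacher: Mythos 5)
Your outline reproduces the paper's strategy almost verbatim: the decomposition $\E_n(Y_n)=\sum_{k,N}N\,\P_n(\T_{n;k,N}^{(3)})$, the pointwise limit \eqref{eq:prob of AkN3}, dominated convergence with one extra power of $N$ (which the factorials absorb without difficulty, exactly as you say), and the evaluation via \eqref{eq:double-sum of S(n,k)2dev} at $x=1-1/e$, $y=1/e$. The problem is your very last claim, that ``the remaining exponential prefactors assemble to give the stated value.'' They do not. Carrying your computation through,
$$
\sum_{k\ge1,\,N\ge k} N\, e^{-(k+1)}\, S(N,k)\,\frac{(1-1/e)^{N-1}}{(N-1)!}
=\frac1e\,\Big[y\,e^x\,e^{y(e^x-1)}\Big]_{x=1-1/e,\,y=1/e}\cdot d_3
=\frac1e\, e^{-1/e}\,e^{(e^{1-1/e}-1)/e}\, d_3=c_3\, d_3,
$$
since $\tfrac1e\,e^{1-1/e}=e^{-1/e}$, so the leftover prefactor is exactly the constant $c_3\approx 0\mbox{.}35$ of Theorem \ref{thm:prob of delta3}, not $1$. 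The quantity your argument actually evaluates is therefore $c_3d_3\approx 0\mbox{.}73$, not $d_3\approx 2\mbox{.}07$.

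The paper arrives at $d_3$ because the first line of its proof of Theorem \ref{thm:mean of N} carries a factor $1/c_3$ in front of the double sum: in effect it computes the expectation of $N_n$ \emph{conditioned} on the event $\{\partial_{\rm root}\ge 3\}$, that is, $\E_n(Y_n)/\P_n(\T_n^{(3)})$, whose limit is $(c_3d_3)/c_3=d_3$. (There is a real tension between the displayed definition $Y_n=\unogrande_{\{\partial_{\rm root}\ge3\}}\cdot N_n$ --- for which your opening identity is the literally correct one --- and the normalized sum the paper evaluates; your blind reading of the definition is faithful, but it does not produce the stated constant.) To repair your argument you must either divide by $\P_n(\T_n^{(3)})\to c_3$ before passing to the limit, or record that the unconditional limit is $c_3d_3$ and that $d_3$ is the limiting conditional mean number of second-generation nodes given that the root is deep. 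Everything else in the proposal --- the interchange of limit and sum, the substitution of \eqref{eq:double-sum of S(n,k)2dev} for \eqref{eq:double-sum of S(n,k)1dev}, and the identification of the bracket $1+x+xy\,e^x$ with $d_3$ --- is correct and coincides with the paper's argument.
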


The numerical value of $d_3$ is $\approx 2\mbox{.}070$.
\begin{proof}
Recalling \eqref{eq:prob of AkN3} and \eqref{eq:prob of A3}, we observe that
\begin{align*}
\lim_{n\to\infty} \E_n(Y_n)&=\frac{1}{c_3}\, \sum_{k,N} N\cdot     S(N,k)\, \frac{(1-{1}/{e})^{N-1}}{(N-1)!}\, e^{-(k+1)}\,
\\
&=\frac{1}{c_3}\, \frac{1}{e} \, \sum_{k,N} N\cdot     S(N,k)\, \frac{(1-1/e)^{N-1}}{(N-1)!}\, \Big(\frac{1}{e}\Big)^k
\\
&=\frac{1}{c_3}\, \frac{1}{e}\,
\frac{1}{e}\, e^{1-1/e} \,e^{(e^{1-1/e}-1)/e}\, \Big[2-\frac{1}{e}+\frac{1}{e}\, \Big(1-\frac{1}{e}\Big)\,e^{1-1/e}\Big]
\\
&=\Big[2-\frac{1}{e}+\frac{1}{e}\, \Big(1-\frac{1}{e}\Big)\,e^{1-1/e}\Big]
\end{align*}
where we have used \eqref{eq:double-sum of S(n,k)2dev} with $x=1-1/e$ and $y=1/e$, and the value of $c_3$ from Theorem \ref{thm:prob of delta3}. The interchange of limit and double sum can be justified by dominated convergence, along the same lines as in the proof of Theorem~\ref{thm:prob of delta3}.\end{proof}

\subsubsection{From rooted to unrooted trees}\label{section:from rooted to unrooted}
Theorems \ref{thm:prob of delta3} and \ref{thm:mean of N} can be readily reinterpreted in the context of unrooted trees.

Fix $n$  and consider the collection $\mathcal{U}_n$ of the $n^{n-2}$ trees labeled with $\{1,  \dots, n\}$ endowed with the uniform probability.
For the sake of clarity, we denote probability and expectation in $\mathcal{U}_n$ with $\P_n^{\prime}$ and $\E_n^{\prime}$, respectively.

Let $X_n(T)$ denote the random variable in $\mathcal{U}_n$ that counts the number of deep nodes of~$T$:
$$
X_n(T)=\sum_{v\in V(T)} \unogrande_{\{\partial_{v}\ge 3\}}=|D(T)|
$$

Consider now a 0-1 matrix $M$, of dimensions $n\times n^{n-2}$, with columns labeled with $T_1, T_2,\dots$, the collection of trees in $\mathcal{U}_n$, and with rows labeled with the nodes $\{1,\dots, n\}$, where in the $(j,T_i)$-entry we place a 1 if the node~$j$ of~$T_i$ is deep, and we place a 0 otherwise.

Summing the entries of the matrix $M$ and dividing by $n^{n-2}$, we obtain the mean value of~$X_n$:
$$
\E^{\prime}_n(X_n)=\frac{1}{n^{n-2}}\sum_{T \in \mathcal{U}_n} X_n(T).
$$

Each (unrooted) tree $T_i$ leads to $n$ different rooted trees $T_i^{(1)}, \dots, T_i^{(n)}$ by choosing any of its nodes as the root; here, $T_i^{(j)}$ means that node $j$ has been selected as the root in the tree $T_i$.

Now build  a 0-1 matrix $M'$ of dimensions $n\times n^{n-1}$: rows are labeled with the $n$ nodes, and the columns with the collection of rooted trees in the following order: first $T_1^{(1)}, \dots, T_1^{(n)}$, then $T_2^{(1)}, \dots, T_2^{(n)}$, etc. The value of the entry $(v_i, T_j^{(k)})$ is 1 if $i=k$ and the node $i$ (the root of the tree $T_j^{(i)}$) is at distance $\ge 3$ to the boundary; and it is 0 otherwise.

The sum of the entries of $M'$, divided by $n^{n-1}$, gives the probability that in a rooted labeled tree, the root is at distance $\ge 3$ to its boundary. As the sum of the entries of $M'$ equals the sum of the entries of $M$, recalling Theorem \ref{thm:prob of delta3}, we deduce the following.
\begin{thm}\label{thm:expected number of deep nodes in unrooted trees}
As $n\to\infty$, the expectation of the proportion of nodes in a labeled tree on $n$ nodes that are at distance $\ge 3$ from any leaf tends to $c_3$, i.e.,
$$
\lim_{n\to\infty}\ \frac{1}{n}\,\E'_n(X)=c_3.
$$
\end{thm}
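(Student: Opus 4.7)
The proof rests on a standard bijective correspondence between rooted and unrooted labeled trees together with the observation that, for a node $v$ of an unrooted tree $T$, the property ``$\partial_v \ge 3$'' depends only on the pair $(T,v)$; in particular, $v$ is a deep node of $T$ if and only if the rooted tree $(T,v)$ lies in $\mathcal{T}^{(3)}$. The plan is therefore to convert the expectation of $X_n$ on $\mathcal{U}_n$ into the probability computed in Theorem~\ref{thm:prob of delta3}, by swapping the order of summation and exploiting the fact that $|\mathcal{T}_n| = n\,|\mathcal{U}_n|$.

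Concretely, I would first write out
\[
\E'_n(X_n) \,=\, \frac{1}{|\mathcal{U}_n|}\sum_{T\in \mathcal{U}_n} |D(T)| \,=\, \frac{1}{|\mathcal{U}_n|}\sum_{T\in \mathcal{U}_n}\sum_{v\in V(T)} \unogrande_{\{\partial_v\ge 3\}}.
\]
Next I would interchange the two sums, noting that the pairs $(T,v)$ with $T \in \mathcal{U}_n$ and $v\in V(T)$ are in bijection with the elements of $\mathcal{T}_n$ (by declaring $v$ to be the root), and that under this bijection the condition $\partial_v \ge 3$ translates exactly into the condition $\partial_{\mathrm{root}} \ge 3$ characterizing $\mathcal{T}_n^{(3)}$. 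This yields
\[
\sum_{T\in \mathcal{U}_n} |D(T)| \,=\, \bigl|\mathcal{T}_n^{(3)}\bigr|,
\]
and consequently
\[
\frac{1}{n}\,\E'_n(X_n) \,=\, \frac{|\mathcal{T}_n^{(3)}|}{n\,|\mathcal{U}_n|} \,=\, \frac{|\mathcal{T}_n^{(3)}|}{|\mathcal{T}_n|} \,=\, \P_n\bigl(\mathcal{T}_n^{(3)}\bigr).
\]

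The conclusion then follows immediately by letting $n\to\infty$ and invoking Theorem~\ref{thm:prob of delta3}, which gives $\P_n(\mathcal{T}_n^{(3)}) \to c_3$. There is no genuine obstacle here: the analytic heavy lifting has already been done in Theorem~\ref{thm:prob of delta3}, and the only thing to verify is the bookkeeping of the double count, i.e.\ the equality between the sum of the entries of the two matrices $M$ and $M'$ described in the paper (rows indexed by nodes, columns indexed by unrooted trees in $\mathcal{U}_n$ resp.\ by rooted trees in $\mathcal{T}_n$). The single point requiring a moment of care is to make sure the correspondence $(T,v) \leftrightarrow (T,v)_{\mathrm{rooted}}$ is genuinely a bijection between $\bigsqcup_{T\in\mathcal{U}_n} V(T)$ and $\mathcal{T}_n$, which is immediate from the definitions of labeled rooted and labeled unrooted trees.
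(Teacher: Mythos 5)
Your argument is correct and is essentially the paper's own proof: the bijection $(T,v)\leftrightarrow(T,v)_{\mathrm{rooted}}$ between $\bigsqcup_{T\in\mathcal{U}_n}V(T)$ and $\mathcal{T}_n$ is exactly the double count that the paper encodes via the matrices $M$ and $M'$, and both reduce the statement to $\frac{1}{n}\,\E'_n(X_n)=\P_n(\mathcal{T}_n^{(3)})$ followed by Theorem~\ref{thm:prob of delta3}. No gaps.
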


Next, in the probability space $\mathcal{U}_n$ of unrooted trees $T$ labeled with $\{1, \dots, n\}$ and endowed with uniform probability, consider the random variable
$$
\Upsilon(T)=\sum_{v\in V(T)} \unogrande_{\{\partial_{v}\ge 3\}}\cdot N_v=\sum_{v \in \D(T)} N_v
$$
where $N_v$ is the number of nodes two units away from $v$. Observe that
$$
N_v(T)=\sum_{w \in \lk(v)} ({\rm deg}(w)-1).
$$

An analogous argument as above using Theorem~\ref{thm:mean of N} instead of Theorem~\ref{thm:prob of delta3}  yields:
\begin{thm}\label{thm:star in unrooted trees}
$$
\lim_{n\to\infty}\ \frac{1}{n}\,\E'_n(\Upsilon)=d_3.
$$
\end{thm}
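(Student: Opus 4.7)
The plan is to follow exactly the rooted-to-unrooted bookkeeping used for Theorem \ref{thm:expected number of deep nodes in unrooted trees}, but now with each row/column entry weighted by the count $N_v$ of second-neighbors rather than by a $0$–$1$ indicator. The key observation is that for a node $v$ of an unrooted tree $T$, rooting $T$ at $v$ turns $v$ into the root, deepness of $v$ in $T$ translates into $\partial_{\text{root}}\ge 3$ of the rooted tree, and the quantity $N_v=\sum_{w\in\lk(v)}(\deg(w)-1)$ becomes precisely the number of nodes in the second generation from the root. So the unrooted statistic $\Upsilon$ and the rooted statistic $Y_n$ are linked by a clean double-counting identity.

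Concretely, I would introduce a $n\times n^{n-2}$ matrix $M$ whose rows are indexed by $\{1,\dots,n\}$ and whose columns are indexed by the trees $T\in \mathcal{U}_n$, with $(j,T)$-entry equal to $N_j(T)$ if $j\in D(T)$ and $0$ otherwise; summing the entries of $M$ and dividing by $n^{n-2}$ gives exactly $\E'_n(\Upsilon)$. In parallel, I would form a $n\times n^{n-1}$ matrix $M'$ whose columns are the rooted trees $T^{(1)},\dots,T^{(n)}$ produced from each $T\in\mathcal U_n$ by rooting at each of its $n$ nodes; the $(j,T^{(k)})$-entry is set to $N(T^{(k)})$ if $j=k$ and the root of $T^{(k)}$ satisfies $\partial_{\text{root}}\ge 3$, and to $0$ otherwise. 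Here $N(T^{(k)})$ is the number of second-generation nodes counted from the root, so that the sum of the entries of $M'$, divided by $n^{n-1}$, equals $\E_n(Y_n)$.

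The central (essentially trivial) identity is that the sums of the entries of $M$ and of $M'$ coincide, because both equal
\[
\sum_{T\in \mathcal U_n}\ \sum_{v\in D(T)} N_v(T).
\]
Indeed, a nonzero contribution in $M$ at $(j,T)$ precisely corresponds to a nonzero contribution in $M'$ at $(j,T^{(j)})$, and in that case the two entries agree by the observation above. Dividing by $n^{n-1}$ and $n^{n-2}$, respectively, yields
\[
\E_n(Y_n)\;=\;\frac{1}{n^{n-1}}\sum_{T\in \mathcal U_n}\Upsilon(T)\;=\;\frac{1}{n}\,\E'_n(\Upsilon),
\]
and the theorem then follows immediately from Theorem \ref{thm:mean of N}, which gives $\lim_{n\to\infty}\E_n(Y_n)=d_3$.

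No step presents a genuine obstacle: the only point requiring care is the matching of the weights $N_v(T)$ and $N(T^{(v)})$, but this is immediate from the definitions of $N_v$ and of the second generation from the root. Everything else is the same bookkeeping already carried out in the proof of Theorem \ref{thm:expected number of deep nodes in unrooted trees}, so the argument can be written concisely by referring back to it.
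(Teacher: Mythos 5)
Your argument is exactly the one the paper intends: the same rooted--unrooted double counting used for Theorem~\ref{thm:expected number of deep nodes in unrooted trees}, now with entries weighted by $N_v$, giving the exact identity $\tfrac{1}{n}\,\E'_n(\Upsilon)=\E_n(Y_n)$ for every $n$ and reducing the statement to Theorem~\ref{thm:mean of N}. This is correct and coincides with the paper's (only sketched) proof, so nothing is missing.
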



\end{document}